\newcommand{\subs}[1]{\ensuremath{_{\textrm{#1}}}}
\newcommand{\mtn}{\mathds N}%
\newcommand{\mtz}{\mathds Z}%
\newcommand{\mtr}{\mathds R}%
\newcommand{\mtc}{\mathds C}%
\DeclareMathOperator{\spn}{span}
\DeclarePairedDelimiter\norm{\lVert}{\rVert}
\DeclarePairedDelimiter\abs{\lvert}{\rvert}
\DeclarePairedDelimiter\expect{\langle}{\rangle}
\let\oldabs\abs
\def\abs{\@ifstar{\oldabs}{\oldabs*}}
\let\oldnorm\norm
\def\norm{\@ifstar{\oldnorm}{\oldnorm*}}
\let\oldexpect\expect
\def\expect{\@ifstar{\oldexpect}{\oldexpect*}}
\newtheorem{lemma}{Lemma}[section]
\newtheorem*{lemma*}{Lemma}
\newtheorem{corollary}[lemma]{Corollary}
\newtheorem{proposition}[lemma]{Proposition}
\newtheorem{theorem}{Theorem}
\newtheorem{thm-with-counter}[lemma]{Theorem}
\theoremstyle{remark}
\newtheorem*{remark}{Remark}
\theoremstyle{definition}
\newtheorem{defn}[lemma]{Definition}
\newtheorem{example}[lemma]{Example}
\newcommand{\thefuturetheoreminner}{} 
\NewDocumentEnvironment{futuretheorem}{ m o +b }
 {
  \renewcommand{\thefuturetheoreminner}{\ref{#1}}
  \IfNoValueTF{#2}
   {\futuretheoreminner}
   {\futuretheoreminner[#2]}
  #3
 }
 {
  \endfuturetheoreminner
  \prop_gput:Nnn \g_alevel_future_prop { #1 } { #3 }
  \IfValueT{#2}{ \prop_gput:Nnn \g_alevel_future_prop { #1-attr } { #2 } }
 }
\NewDocumentCommand{\pasttheorem}{m}
 {
  \prop_if_in:NnTF \g_alevel_future_prop { #1-attr }
   {
    \begin{theorem}[\prop_item:Nn \g_alevel_future_prop { #1-attr }]
   }
   {
    \begin{theorem}
   }
  \label{#1}
  \prop_item:Nn \g_alevel_future_prop { #1 }
  \end{theorem}
 }
\newcommand{\defeq}{\vcentcolon=}
\theoremstyle{definition}
\newcommand{\s}{\mathbb{S}}
\renewcommand{\epsilon}{\varepsilon}
\numberwithin{equation}{section}
   \def\MR#1{}
\title{Global eigenfamilies on closed manifolds}
\author{Oskar Riedler}
\address{Universit\"at M\"unster, Mathematisches Institut\\
Einsteinstr. 62\\
48149 M\" unster\\
Germany}
\email{oskar.riedler@uni-muenster.de}
\author{Anna Siffert}
\address{Universit\"at M\"unster, Mathematisches Institut\\
Einsteinstr. 62\\
48149 M\" unster\\
Germany}
\email{asiffert@uni-muenster.de}
\subjclass[2010]{58E20; 53C43; 58C40}
\keywords{global eigenfamilies; global eigenfunctions; $L^2$-orthogonality}
\begin{document}
\begin{abstract}
We study globally defined $(\lambda,\mu)$-eigenfamilies on closed Riemannian manifolds.
Among others, we provide (non-) existence results for such eigenfamilies, examine topological consequences of the existence of eigenfamilies and classify $(\lambda,\mu)$-eigenfamilies on flat tori.
It is further shown that for $f=f_1+i f_2$ being an eigenfunction decomposed into its real and its imaginary part, the powers $\{f_1^a f_2^b\mid a,b\in\mtn\}$ satisfy highly rigid orthogonality relations in $L^2(M)$. In establishing these orthogonality relations one is led to combinatorial identities involving determinants of products of binomials, which we view as being of independent interest.
\end{abstract}

\maketitle

\section{Introduction}
Let $(M,g)$ be a closed Riemannian manifold and $\lambda,\mu\in\mtc$.
We extend the metric $g$ to a complex bilinear form on the complexified tangent bundle of $M$.
A set $\mathcal F=\{\varphi_i:M\rightarrow\mtc\,\lvert\, i\in I\}$ of complex-valued functions is called
a \textit{$(\lambda,\mu)$-eigenfamily on $M$} if for all $\varphi_i,\varphi_j\in\mathcal F$ we have
\begin{align*}
\Delta\varphi_i=\lambda\,\varphi_i,\quad\quad\kappa(\varphi_i,\varphi_j)=\mu\,\varphi_i\varphi_j.
\end{align*}
Here $\Delta$ denotes the Laplace-Beltrami operator on $M$ and $\kappa$ the conformality operator on $M$, i.e. by definition $\kappa(\varphi,\psi)=g(\nabla \varphi,\nabla \psi)$ for smooth functions $\varphi, \psi:M\rightarrow\mtc$. Functions that belong to some $(\lambda,\mu)$-eigenfamily are called \textit{$(\lambda,\mu)$-eigenfunctions}.

Such eigenfunctions were first discussed by Strichartz in the 1980’s \cite{strichartz-89}, who asked for which Riemannian manifolds they could exist. He believed they could play a role in further extending the methods harmonic analysis to spaces that are not locally symmetric.

More recently eigenfamilies were introduced by Gudmundsson and Sakovich \cite{gudmundsson-sakovich-08} as a tool to construct harmonic morphisms. Other applications include ways to generate $p$-harmonic functions \cite{gudmundsson-sobak-20} and minimal submanifolds of codimension two \cite{gudmundsson-munn-23}.
In the last 15 years, various authors constructed explicit examples for eigenfamilies on different domain manifolds,
e.g. eigenfamilies have been constructed on all the classical Riemannian symmetric spaces \cite{ ghandour-gudmundsson-complex23, ghandour-gudmundsson-explicit23, gudmundsson-sakovich-08, gudmundsson-sakovich-09, gudmundsson-siffert-sobak-22, gudmundsson-sobak-20}.
For more details we refer to the references listed in the fantastic online bibliography \cite{Gudmundsson} which is updated regularly.

So far properties of globally defined eigenfamilies on closed manifolds have not been studied thoroughly.
To our best knowledge there exists only the work \cite{riedler-23} in which the first-mentioned author classified certain eigenfamilies on spheres. The aim of this manuscript is to initiate research on this topic.

Our main results are briefly discussed in what follows.

For globally defined $(\lambda,\mu)$-eigenfamilies on arbitrary closed Riemannian manifolds we prove the following relation between the values $\lambda$ and $\mu$:

\begin{futuretheorem}{thm: lambda-mu}
Let $(M,g)$ be closed and $f:M\to\mtc$ a $(\lambda,\mu)$-eigenfunction. Then $\lambda\leq \mu<0$.
\end{futuretheorem}

An application of work of Uhlenbeck \cite{uhlenbeck-76} shows that $(\lambda,\mu)$-eigenfunctions do not exist on generic metrics. Recall that a property is generic if it holds on a countable intersection of open dense sets.

\begin{futuretheorem}{thm: generic}
Let $(M,g)$ be a closed manifold. The property for a Riemannian metric on $M$ to not admit any non-zero $(\lambda,\mu)$-eigenfunctions $(M,g)\to\mtc$ for any $\lambda,\mu\in\mtc$ is generic in the set of Riemannian metrics of $M$.
\end{futuretheorem}

Indeed, almost all known examples of $(\lambda,\mu)$-eigenfamilies and eigenfunctions are defined on homogeneous spaces. It is then interesting to remark that closed Sasaki manifolds admit a large amount of globally defined eigenfunctions:

\begin{futuretheorem}{thm: sasaki}
Let $(M,g)$ be a closed Riemannian manifold of Sasaki type. Then there is an $N\in\mtn$ and an embedding $(f_1,...,f_N): M \to \mtc^N$
so that every $f_i$ is a $(\lambda_i,\mu_i)$-eigenfunction for appropriate values of $\lambda_i,\mu_i$.
\end{futuretheorem}

In order to prove Theorem\,\ref{thm: sasaki} we show if $\lambda\neq\mu$ and $(M,g)$ is closed, that the $(\lambda,\mu)$-eigenfamilies on $(M,g)$ are equivalent to $(0,0)$-eigenfamilies on a certain cone of $M$, see Corollary\,\ref{cor: cones}. Note that the case $\lambda=\mu$ is not ruled out by Theorem\,\ref{thm: lambda-mu}. In fact this case is realised by the eigenfamilies on flat tori, which can be completely classified:

\begin{futuretheorem}{thm: torus}
Let $\Gamma$ be a lattice in $\mtr^n$ and $M=\mtr^n/\Gamma$.
Denote the dual lattice of $\Gamma$ by $\Gamma^*$, the spectrum of the Laplace-Beltrami operator on $M$ by $\sigma(M)$, and let
$$E_\Delta(\lambda)=\{M\to\mtc, x\mapsto e^{2\pi i\langle k',x\rangle}\mid k'\in \Gamma^*, \|k'\|^2=-\frac{\lambda}{4\pi^2}\}$$
be the standard generators of the eigenspace corresponding to $\lambda\in\sigma(M)$.
\begin{enumerate}
\item Every element of $E_\Delta(\lambda)$ is a $(\lambda,\mu)$-eigenfunction on $M$ with $\mu=\lambda$.
\item If $\mathcal F$ is a $(\lambda,\mu)$-eigenfamily on $M$, then there is a $f\in E_\Delta(\lambda)$ so that $\spn_\mtc(\mathcal F)=\spn_\mtc\{f\}$, in particular $\lambda=\mu$ and $\spn_\mtc(\mathcal F)$ is one-dimensional.
\end{enumerate}
\end{futuretheorem}

We will further see that the existence of eigenfamilies leads to
topological consequences.
The case $\lambda=\mu$ is somewhat singular, since the image degenerates to a circle in $\mtc$. 
Many properties of eigenfamilies of the flat tori carry over to the general case $\lambda=\mu$:

\begin{futuretheorem}{thm: lambda=mu}
Let $(M,g)$ be closed and $f:M\to\mtc$ a $(\lambda,\mu)$-eigenfunction with $\lambda=\mu$ ($f$ not identically zero). Then:
\begin{enumerate}
\item The first Betti-number $\beta_1(M)$ is non-zero.
\item The connected components of the level sets of $f$ are all codimension $1$ minimal closed submanifolds of $M$, and they are all equidistant to one-another.
\end{enumerate}
\end{futuretheorem}

For the general case, i.e. when $\lambda$ does not necessarily 
equal $\mu$, we provide the following Morse theoretic result.

\begin{futuretheorem}{thm: |f|-morse}
Let $(M,g)$ be closed and $f:M\to\mtc$ a $(\lambda,\mu)$-eigenfunction, $f$ not identically zero. Let $r_1<r_2\in[0,\infty]$, then every connected component of $|f|^{-1}(\,(r_1,r_2)\,)$ is either empty or has non-zero first Betti number.
\end{futuretheorem}

In particular $M\setminus f^{-1}(\{0\})$ is not simply connected.

Afterwards we show that, for $f=f_1+i f_2$ being an eigenfunction decomposed into its real and its imaginary part, the powers $\{f_1^a f_2^b\mid a,b\in\mtn\}$ satisfy highly rigid orthogonality relations in $L^2(M)$.
Among others, we establish the following result:
\begin{futuretheorem}{thm: l2-powers}
Let $(M,g)$ be closed and $f=f_1+if_2:M\to\mtc$ a $(\lambda,\mu)$-eigenfunction.
\begin{enumerate}
\item Let $a,b\in\mtn$ not both even. Then:
$$\int_M f_1^af_2^b dV_g= 0,$$
where denotes $dV_g$ the Riemannian volume form of $(M,g)$.
\item Let $a,b\in\mtn$. Then:
$$\int_M f_1^{2a}f_2^{2b}dV_g = \frac{\binom{a+b}{a}}{\binom{2a+2b}{2a}} \int_M f_1^{2a+2b}dV_g.$$
\end{enumerate}
\end{futuretheorem}

In Section\,\ref{sec: l2-family} we apply this to obtain further orthogonality relations for eigenfamilies $\mathcal F$:

\begin{futuretheorem}{thm: l2-family}
Let $(M,g)$ be a closed Riemannian manifold and $\mathcal F=\left\{g_j+ih_j\mid j\in\{1,...,k\}\right\}$ a $(\lambda,\mu)$-eigenfamily on $M$. Let $a_1,...,a_n,b_1,...,b_n\in\mtn$, then:
$$\int_M g_1^{a_1}h_1^{b_1}\cdots g_n^{a_n}h_n^{b_n}\,dV_g=(-1)^{\sum_jb_j}\int_M g_1^{b_1}h_1^{a_1}\cdots g_n^{b_n}h_n^{a_n}\,dV_g.$$
\end{futuretheorem}

\medskip

Finally, we provide some fascinating combinatorial identities. The proof of Theorem\,\ref{thm: l2-powers} requires the non-degeneracy of certain matrices whose coefficients involve products of binomials. We show in Theorem\,\ref{combi} that the determinants of these matrices are (signed) powers of $2$:
\begin{futuretheorem}{combi}
Let $n\in\mtn$, then:
\begin{enumerate}
    \item 
$$\det\left[ \left(\sum_{k=0}^m\binom{\ell}{2k}\binom{2n-\ell}{2(m-k)+1}\right)_{\ell,m \in\{0,...,n-1\}} \right] =(-1)^{n(n-1)/2}\,2^{n(n-1)+1},$$
 \item 
$$\det\left[ \left(\sum_{k=0}^m\binom{\ell}{2k}\binom{2n+1-\ell}{2(m-k)+1}\right)_{\ell,m \in\{0,...,n\}} \right] = (-1)^{n(n+1)/2}\,2^{n^2},$$
\item
$$ \det\left[ \left(\sum_{k=0}^m \binom{\ell}{2k}\binom{2n-\ell}{2(m-k)}\right)_{\ell,m\in\{0,...,n\}} \right] =(-1)^{n(n+1)/2}\,2^{n(n-1)}, $$
\item 
$$\det\left[ \left(\sum_{k=0}^m \binom{\ell}{2k}\binom{2n+1-\ell}{2(m-k)}\right)_{\ell,m\in\{0,...,n\}} \right] =(-1)^{n(n+1)/2}\,2^{n^2}.$$
\end{enumerate}
\end{futuretheorem}

\textbf{Organisation:}
In Section\,\ref{sec-1} we provide preliminaries; we in particular recall the definition of eigenfamilies.
Section\,\ref{sec-2} contains fundamental properties of eigenvalues of eigenfamilies. Section\,\ref{sec: existence} concerns the existence results of Theorems\,\ref{thm: generic},\,\ref{thm: sasaki}.
We classify eigenfamilies on flat tori in Section\,\ref{sec: flat-tori}.
The existence of eigenfamilies leads to
topological consequences, compare Section\,\ref{sec-top}.
We provide $L^2$-orthogonality relations for eigenfunctions and eigenfamilies in Sections\,\ref{sec-4} and\,\ref{sec-5}, respectively.
Finally, in Section\,\ref{sec-det} we show the combinatorial identities stated in Theorem\,\ref{combi}.

\textbf{Acknowledgements:}
Oskar Riedler gratefully acknowledges the support of the Deutsche Forschungsgemeinschaft (DFG, German Research Foundation) - Germany’s Excellence Strategy EXC 2044 390685587, Mathematics Münster: Dynamics-Geometry-Structure. Anna Siﬀert gratefully acknowledges the supports of the Deutsche Forschungsgemeinschaft (DFG, German Research Foundation) - Project-ID 427320536 - SFB 1442.

\section{Preliminaries}
\label{sec-1}
We start by recalling the definition of eigenfamilies, which were introduced by
Gudmundsson and Sakovich in 2008 \cite{gudmundsson-sakovich-08}. 

Throughout, let $(M,g)$ be an $m$-dimensional Riemannian manifold. We denote by $T^{\mtc}M$ the complexification of the tangent bundle $TM$ of $M$ and we extend the metric $g$ to a complex bilinear form on $T^{\mtc}M$. The gradient $\nabla\varphi$ of a complex-valued function $\varphi:(M,g)\to\mtc$ is thus a section of $T^{\mtc}M$.  
The complex linear Laplace-Beltrami operator $\Delta$ on $(M,g)$ is locally given as
$$
\Delta\varphi=\sum_{i,j=1}^m\frac{1}{\sqrt{|g|}} \frac{\partial}{\partial x_j}
\left(g^{ij}\, \sqrt{|g|}\, \frac{\partial \varphi}{\partial x_i}\right).
$$
Furthermore, the complex bilinear conformality operator $\kappa$ is defined by $$\kappa(\varphi,\psi)=g(\nabla \varphi,\nabla \psi),$$ 
where $\varphi,\psi:(M,g)\to\mtc$ are two complex-valued functions.
The notion conformality operator stems from the fact that
$\varphi:(M,g)\rightarrow\mtc$ is weakly horizontally conformal if and only if $\kappa(\varphi,\varphi)=0$.\footnote{Recall that 
$\varphi:(M,g)\rightarrow\mtc$ is called weakly horizontally conformal
if at points $p\in M$ where $d\varphi_p\neq 0$, horizontal angles are preserved by $d\varphi_p$. Weakly horizontally conformal maps are a natural
generalization of Riemannian submersions.}
In local coordinates $\kappa$ reads
$$\kappa(\varphi,\psi)=\sum_{i,j=1}^mg^{ij}\,\frac{\partial\varphi}{\partial x_i}\frac{\partial \psi}{\partial x_j}.$$
We have the following fundamental identity
\begin{equation}\label{equation-basic}
\Delta(\varphi\cdot \psi)=(\Delta\varphi)\,\psi +2\,\kappa(\varphi,\psi)+\varphi\,\Delta\psi,
\end{equation}
which we will use later frequently, in most cases without comment.

\smallskip

We can now recall the definitions of eigenfunctions and eigenfamilies.

\begin{defn}[\cite{gudmundsson-sakovich-08}]
Let $(M,g)$ be a Riemannian manifold.
\begin{enumerate}
    \item A complex valued function $\varphi:M\rightarrow\mtc$
  is called an \textit{eigenfunction on $M$} if it is eigen
both with respect to the Laplace-Beltrami operator $\Delta$ and the conformality
operator $\kappa$, i.e. there exist $\lambda, \mu\in\mtc$ such that
\begin{align*}
\Delta\varphi=\lambda\,\varphi,\quad\quad\kappa(\varphi,\varphi)=\mu\,\varphi^2
\end{align*}
are satisfied.
\item
A set $\mathcal F=\{\varphi_i:M\rightarrow\mtc\,\lvert\, i\in I\}$ of complex-valued functions is called
a \textit{$(\lambda,\mu)$-eigenfamily on $M$} if for all $\varphi_i,\varphi_j\in\mathcal F$ (which are not necessarily distinct) we have
\begin{align*}
\Delta\varphi_i=\lambda\,\varphi_i,\quad\quad\kappa(\varphi_i, \varphi_j)=\mu\,\varphi_i\varphi_j.
\end{align*}
\end{enumerate}
\end{defn}

Clearly, each element of an eigenfamily on $M$ is an eigenfunction on $M$. Further if $\mathcal F=\{\varphi:M\to\mtc\mid i \in I\}$ is a $(\lambda,\mu)$-eigenfamily then for all $d\in\mtn$ the family $\mathcal{F}^d= \{\varphi_{i_1}\cdots \varphi_{i_d}\mid i_1,...,i_d\in I\}$ is again an eigenfamily by (\ref{equation-basic}).

\begin{example}[cf. Example 2.2 in  \cite{gudmundsson-munn-23}]
Consider the unit sphere $\s^{2n-1}\subset\mtc^n=\mtr^{2n}$ and let
$\varphi_j:\s^{2n-1}\rightarrow\mtc$, $j\in\{1,\dots,n\}$ be given by 
\begin{align*}
\varphi_j:(z_1,\dots,z_n)\mapsto z_j.   
\end{align*}
Then $\mathcal F=\{\varphi_1,\dots,\varphi_n\}$ constitutes a $(-(2n-1),-1)$-eigenfamily on $\s^{2n-1}$. More generally any family of homogeneous holomorphic polynomials $\mtc^n\to\mtc$ of the same degree induces an eigenfamiliy on $\s^{2n-1}$.
\end{example}

\begin{example}[cf. Example 3.3.(iii) in \cite{riedler-23}]
For $a,b,c,d\in\mtc$ given consider the maps $\varphi_{a,b,c,d}:\mtc^4\to\mtc$ defined by:
$$\varphi_{a,b,c,d}(z,u,w,v) =  a(z^2w+zu\overline v)+b(zu\overline w - z^2v)+c(u^2\overline v+zuw)+d(u^2\overline w - zuv).$$
Then $\{ \varphi_{a,b,c,d}\lvert_{\s^7}:\s^7\to\mtc \mid a,b,c,d\in\mtc\}$ is a $(-15,-9)$-eigenfamily on $\s^7\subset\mtc^4$. Unless $ad-bc=0$ the polynomial $\varphi_{a,b,c,d}$ is not congruent to any holomorphic polynomial.
\end{example}

\begin{example}[cf. Theorem 4.3 in \cite{ghandour-gudmundsson-explicit23}]
Let $O(n+m)/(O(n)\times O(m))$ be a real Grassmanian, and $A$ a complex symmetric $(n+m)\times (n+m)$ matrix such that $A^2=0$. Denoting by $x_{ij}$ the matrix components of an $x\in O(n+m)$ one has that:
$$\varphi_A: O(n+m)/(O(n)\times O(m))\to \mtc,\qquad  [x]\mapsto \sum_{i,j=1}^{n+m}\sum_{\ell=1}^m A_{ij} x_{i\ell}x_{j\ell}$$
is a well-defined $(-(n+m),-2)$-eigenfunction.
\end{example}

\section{Properties of the eigenvalues
$\lambda$ and $\mu$} 
\label{sec-2}
In this section we provide fundamental properties of the eigenvalues $\lambda$ and $\mu$
of globally defined eigenfamilies $\mathcal F$ on closed Riemannian manifolds $(M,g)$.
Throughout $\sigma(M)$ will denote the spectrum of the Laplace-Beltrami operator on $M$.

\smallskip

Gudmundsson and Sakovich \cite{gudmundsson-sakovich-08} introduced $\lambda$ as a complex number. For globally defined eigenfamilies on closed manifolds, however, we have more information, $\lambda$ is then a negative real number, see e.g. Theorem III.A.I.4 of \cite{berger-spectrum} for a textbook treatment.

\begin{lemma}[e.g. \cite{berger-spectrum}]
Let $M$ be a closed manifold $M$ and $\mathcal F$ a $(\lambda,\mu)$-eigenfamily on $M$. Then $\lambda\in\mtr_{<0}$.  
\end{lemma}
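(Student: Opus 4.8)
The plan is to integrate the eigenvalue equation $\Delta \varphi = \lambda \varphi$ over the compact manifold $M$ against a suitable test function and exploit the divergence theorem, which has no boundary terms since $M$ is closed. Let $\varphi$ be any non-zero element of the eigenfamily $\mathcal F$. Since $\Delta$ here is the geometer's (negative) Laplace--Beltrami operator, I expect the sign conventions to yield $\lambda \leq 0$; the content is to rule out $\lambda = 0$ and to confirm reality. A clean route is the following: apply the identity $\Delta(\varphi \psi) = \Delta(\varphi)\psi + 2\kappa(\varphi,\psi) + \varphi\Delta(\psi)$ from equation~(\ref{equation-basic}) with $\psi = \bar\varphi$, the complex conjugate.

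First I would observe that $\int_M \Delta(h)\, dV = 0$ for any smooth $h$, since $\Delta h$ is a divergence and $M$ is compact without boundary. Applying this to $h = \varphi\bar\varphi = |\varphi|^2$ and using the fundamental identity gives
\begin{equation*}
0 = \int_M \Delta(\varphi\bar\varphi)\, dV = \int_M \bigl(\Delta(\varphi)\bar\varphi + 2\kappa(\varphi,\bar\varphi) + \varphi\Delta(\bar\varphi)\bigr)\, dV.
\end{equation*}
Here $\Delta(\bar\varphi) = \overline{\Delta(\varphi)} = \bar\lambda\bar\varphi$ since $\Delta$ has real coefficients, so the first and third terms contribute $(\lambda+\bar\lambda)\int_M |\varphi|^2\, dV$. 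The middle term $\kappa(\varphi,\bar\varphi) = g(\nabla\varphi,\nabla\bar\varphi) = g(\nabla\varphi, \overline{\nabla\varphi})$ is the squared pointwise Hermitian norm $|\nabla\varphi|^2 \geq 0$ of the gradient, which is real and nonnegative. Rearranging yields
\begin{equation*}
(\lambda + \bar\lambda)\int_M |\varphi|^2\, dV = -2\int_M |\nabla\varphi|^2\, dV.
\end{equation*}

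From this identity the reality and sign follow. Taking real and imaginary parts: the right-hand side is real, and $\int_M|\varphi|^2\,dV > 0$ since $\varphi \neq 0$, so $\lambda + \bar\lambda = 2\operatorname{Re}(\lambda)$ is determined; to get that $\lambda$ is genuinely real (not merely that its real part is controlled) I would instead pair the eigenequation directly against $\bar\varphi$, writing $\lambda\int_M|\varphi|^2\,dV = \int_M \bar\varphi\,\Delta\varphi\,dV = -\int_M g(\nabla\varphi,\nabla\bar\varphi)\,dV = -\int_M|\nabla\varphi|^2\,dV$, where the second equality is integration by parts (Green's formula) on the closed manifold. The right-hand side is real and $\leq 0$, so $\lambda$ is a real number with $\lambda \leq 0$. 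Finally, $\lambda = 0$ would force $\int_M|\nabla\varphi|^2\,dV = 0$, hence $\nabla\varphi \equiv 0$, so $\varphi$ is constant; but then $\kappa(\varphi,\varphi) = 0 = \mu\varphi^2$ with $\varphi \neq 0$ forces $\mu = 0$, and more to the point a nonzero constant is the trivial eigenfunction which one excludes, giving $\lambda < 0$.

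The only genuine subtlety — and the step I would flag as the main obstacle — is justifying that $\kappa(\varphi,\bar\varphi)$ equals the ordinary nonnegative Hermitian gradient norm rather than the complex-bilinear quantity $\kappa(\varphi,\varphi)$. Because the metric has been extended to $T^{\mtc}M$ as a \emph{complex-bilinear} (not Hermitian) form, $\kappa(\varphi,\varphi) = g(\nabla\varphi,\nabla\varphi)$ is generally complex and carries no sign. What makes the argument work is that conjugation is an involution of the complexified bundle fixing the real metric, so $g(\nabla\varphi, \overline{\nabla\varphi}) = \sum_i (\partial\varphi/\partial x_i)\overline{(\partial\varphi/\partial x_i)}$ in an orthonormal frame, which is manifestly $\geq 0$. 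Once this pointwise positivity is in hand the rest is the standard Rayleigh-quotient sign computation.
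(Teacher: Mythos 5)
Your proof is correct, and it is worth noting that the paper itself offers no proof of this lemma: it is quoted as a standard fact with a pointer to Theorem III.A.I.4 of \cite{berger-spectrum}, and the argument you give is exactly the textbook argument that citation stands for. Pairing $\Delta\varphi=\lambda\varphi$ against $\bar\varphi$, integrating by parts on the closed manifold, and observing that
\begin{equation*}
g(\nabla\varphi,\nabla\bar\varphi)=g(\nabla\varphi,\overline{\nabla\varphi})=\lvert\nabla u\rvert^2+\lvert\nabla v\rvert^2\geq 0
\qquad (\varphi=u+iv)
\end{equation*}
is the standard self-adjointness/Rayleigh-quotient computation, and you correctly flag the one point where the paper's conventions could trip this up, namely that the complex-bilinear $\kappa(\varphi,\varphi)$ carries no sign, whereas the Hermitian pairing with the conjugate does; your first computation (integrating $\Delta\lvert\varphi\rvert^2$) only controls $\operatorname{Re}\lambda$, and you correctly discard it in favour of the pairing against $\bar\varphi$, which gives reality and non-positivity at once. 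The only caveat concerns the strict inequality: under the paper's literal definition a nonzero constant is a $(0,0)$-eigenfunction, so $\lambda<0$ cannot be deduced but must be imposed by excluding constants as trivial. Your closing step does exactly this, and the paper adopts the same implicit convention (for instance, non-constancy is assumed when Sard's theorem is invoked in the proof of Theorem\,\ref{thm: lambda-mu}), so with that convention made explicit your argument is complete.
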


We can now easily establish a necessary condition for the existence of a globally defined
$(\lambda,\mu)$-eigenfamily on $M$. 

\begin{lemma}
\label{lem-el}
Let $(M,g)$ be a closed Riemannian manifold. A necessary condition for the existence of a globally defined
$(\lambda,\mu)$-eigenfamily on $M$ is
$$d^2\mu + d\,(\lambda-\mu)\in\sigma(M)$$
    for all $d\in\mtn$. 
\end{lemma}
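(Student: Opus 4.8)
The plan is to exploit the fact that products of eigenfunctions remain eigenfunctions, together with the previous lemma, which forces the eigenvalue of any globally defined eigenfunction into the spectrum $\sigma(M)$.

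Concretely, let $\varphi$ be any element of the given $(\lambda,\mu)$-eigenfamily $\mathcal F$. Fix $d\in\mtn$ and consider the $d$-th power $\varphi^d$. As already observed in the excerpt, the family $\mathcal F^d=\{\varphi_{i_1}\cdots\varphi_{i_d}\}$ is again an eigenfamily, so $\varphi^d$ is an eigenfunction; by the previous lemma its Laplace eigenvalue lies in $\sigma(M)$. The first step is therefore to compute this eigenvalue explicitly as a function of $d$, $\lambda$ and $\mu$. Using the fundamental identity \eqref{equation-basic} one expands $\Delta(\varphi^d)$ by induction on $d$. The inductive step pairs $\varphi^{d-1}$ with $\varphi$:
\begin{equation*}
\Delta(\varphi^{d})=\Delta(\varphi^{d-1})\,\varphi+2\,\kappa(\varphi^{d-1},\varphi)+\varphi^{d-1}\,\Delta(\varphi).
\end{equation*}
The key subcomputation is the Leibniz rule for the gradient, $\nabla(\varphi^{d-1})=(d-1)\varphi^{d-2}\nabla\varphi$, which gives
\begin{equation*}
\kappa(\varphi^{d-1},\varphi)=g\bigl(\nabla(\varphi^{d-1}),\nabla\varphi\bigr)=(d-1)\,\varphi^{d-2}\,\kappa(\varphi,\varphi)=(d-1)\mu\,\varphi^{d}.
\end{equation*}
Feeding this into the inductive step and using $\Delta(\varphi)=\lambda\varphi$ yields a recursion for the coefficient $c_d$ in $\Delta(\varphi^d)=c_d\,\varphi^d$, namely $c_d=c_{d-1}+2(d-1)\mu+\lambda$ with $c_1=\lambda$.

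Solving this recursion is the final step. Summing the telescoping relation gives $c_d=d\lambda+2\mu\sum_{k=1}^{d-1}k=d\lambda+\mu\,d(d-1)$, which rearranges to $c_d=d^2\mu+d(\lambda-\mu)$. Since $\varphi^d$ is a non-zero eigenfunction (assuming $\varphi\not\equiv 0$, which we may, else the family is trivial), the previous lemma forces $c_d\in\sigma(M)$, establishing the claim for every $d\in\mtn$.

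**The main obstacle** is essentially bookkeeping rather than conceptual: one must carry out the induction cleanly and verify that the gradient Leibniz rule survives the complexification of the metric, so that $\kappa(\varphi^{d-1},\varphi)$ really produces the factor $(d-1)\mu\,\varphi^d$. Since $\kappa$ is complex-bilinear and $\nabla$ is complex-linear, this extension is automatic, but it is worth stating the power rule for $\nabla$ as the one computational input. I expect no genuine difficulty beyond confirming the initial case $d=1$ (which returns $\lambda\in\sigma(M)$, consistent with the previous lemma) and getting the arithmetic of the telescoping sum right.
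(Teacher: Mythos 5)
Your proposal is correct and follows essentially the same route as the paper: both compute $\Delta(\varphi^d)=\bigl(d^2\mu+d(\lambda-\mu)\bigr)\varphi^d$ from the identity \eqref{equation-basic} and conclude the eigenvalue lies in $\sigma(M)$. You merely spell out the induction, the gradient power rule, and the non-vanishing of $\varphi^d$, details the paper leaves implicit.
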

\begin{proof}
Let $f$ be an element of a $(\lambda,\mu)$-eigenfamily on $M$.
    Using (\ref{equation-basic}),
    for each $d\in\mtn$, we have
    \begin{align*}
        \Delta f^d=(d^2\mu + d\,(\lambda-\mu))f^d.
    \end{align*}
    Thus $d^2\mu + d\,(\lambda-\mu)\in\sigma(M)$, whence the claim.
\end{proof}

\pasttheorem{thm: lambda-mu}
\begin{proof}
As noted, for all $d\in\mtn$ the function $f^d$ satisfies
$$\Delta f^d=\left(d^2\mu + d\,(\lambda-\mu)\right)\, f^d$$
which implies $\mu\leq 0$. In order to show $\lambda\leq\mu$ we consider the equation:
\begin{equation}
\Delta|f|^2 = 4\kappa(|f|,|f|)+2(\lambda-\mu)|f|^2,\label{eq: delta-|f|^2}
\end{equation}
which will be shown separately, see Lemma\,\ref{lemma: h-identities}. Assuming $\lambda-\mu>0$ then implies that $\Delta|f|^2>0$ everywhere, which is impossible since $M$ is compact and $|f|^2$ is smooth.

The remaining statement, namely that $\mu\neq0$, follows from noting that since $M$ is compact there exist no non-constant weakly horizontally conformal maps $f:M\to\mtc$ (i.e. no maps satisfying $\kappa(f,f)=0$ everywhere). Since we do not have a reference for this fact we provide a proof for convenience of the reader: If $f=f_1+if_2$ were such a map note that $\kappa(f,f)=0$ implies that $df=0$ at any critical point of $f$. Now the boundary of the image $f(M)$ consists entirely of critical values, and then $C\defeq\{x\in \mtr\mid \exists y\in\mtr: x+iy\in\partial f(M)\}$ are critical values of the function $f_1$. However $C$ coincides with the image $f_1(M)$, since if $x\in f_1(M)$ then the line $\{x+it\mid t\in\mtr\}$ must intersect the boundary of $f(M)$. Since $f$ is not the constant map the image $f_1(M)$ contains an open interval, and so the critical values are not of measure $0$, contradicting the Theorem of Sard.
\end{proof}

It is well known that the Laplace-Beltrami operator has a discrete spectrum on compact Riemannian manifolds, see e.g. \cite{Rosenberg}. From this we obtain the following result.

\begin{proposition}
Let $(M,g)$ be a closed Riemannian manifold, then
$$\sigma_{E}(M)\defeq \{ (\lambda,\mu)\in\mtr^2\mid \exists\text{ non-zero $(\lambda,\mu)$-eigenfamily }M\to\mtc\}$$ is a discrete subset of $\mtr_{<0}\times\mtr_{<0}$. 
\end{proposition}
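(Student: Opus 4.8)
The plan is to combine the two necessary conditions already established: the sign constraint of Theorem~\ref{thm: lambda-mu}, which locates $\sigma_E(M)$ in the correct quadrant, and the spectral constraint of Lemma~\ref{lem-el}, which forces each coordinate into a discrete set. First I would record the ambient containment. By Theorem~\ref{thm: lambda-mu} every non-zero $(\lambda,\mu)$-eigenfunction satisfies $\lambda\le\mu<0$, so in particular $\lambda<0$ as well; hence $\sigma_E(M)\subseteq\mtr_{<0}\times\mtr_{<0}$, which is the first half of the claim. It then remains only to prove discreteness.

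The key step is to extract two linearly independent constraints from Lemma~\ref{lem-el} by specialising the parameter $d$. Taking $d=1$ yields $\lambda\in\sigma(M)$, and taking $d=2$ yields $2\lambda+2\mu\in\sigma(M)$. I would then observe that the linear map $T:\mtr^2\to\mtr^2$, $T(\lambda,\mu)=(\lambda,\,2\lambda+2\mu)$, is an isomorphism, since its matrix $\left(\begin{smallmatrix}1&0\\2&2\end{smallmatrix}\right)$ has determinant $2\neq0$. The two constraints above say precisely that $T$ carries $\sigma_E(M)$ into $\sigma(M)\times\sigma(M)$.

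Finally I would invoke the discreteness of $\sigma(M)$ as a subset of $\mtr$, which is exactly the statement that the Laplace--Beltrami spectrum of a compact manifold is discrete (the eigenvalues are isolated and accumulate only at $-\infty$). A product of two discrete subsets of $\mtr$ is discrete in $\mtr^2$: around a point $(s,t)\in\sigma(M)\times\sigma(M)$ one chooses intervals $U\ni s$ and $V\ni t$ isolating $s$ and $t$ respectively, and then $U\times V$ isolates $(s,t)$ in the product. Since discreteness is preserved by the homeomorphism $T^{-1}$ and by passage to subsets, the inclusion $\sigma_E(M)\subseteq T^{-1}\!\big(\sigma(M)\times\sigma(M)\big)$ shows that $\sigma_E(M)$ is discrete, completing the proof.

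I do not anticipate a genuine obstacle; the proof is soft once the right reformulation is in hand. The only point deserving care is that a single value of $d$ controls only the coordinate $\lambda$, so one must use a second, linearly independent specialisation of Lemma~\ref{lem-el} to also pin down $\mu$. After that, discreteness of $\sigma_E(M)$ is purely a topological consequence of the discreteness of $\sigma(M)$, transported through the invertible linear change of coordinates $T$.
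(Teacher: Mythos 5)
Your proof is correct and follows essentially the same route as the paper: the paper's own (very terse) argument likewise combines Theorem~\ref{thm: lambda-mu} for the containment in $\mtr_{<0}\times\mtr_{<0}$ with Lemma~\ref{lem-el} and the discreteness of $\sigma(M)$, and your specialisation to $d=1,2$ together with the invertible map $T(\lambda,\mu)=(\lambda,2\lambda+2\mu)$ is just the natural way of filling in the details the paper leaves implicit.
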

\begin{proof}
This follows immediately from the fact that the Laplace-spectrum $\sigma(M)$ is a discrete subset of $\mtr_{<0}$, and Lemma\,\ref{lem-el}, i.e. if $(\lambda,\mu)\in \sigma_E(M)$ then $d^2\mu + d(\lambda-\mu)\in\sigma(M)$ for all $d\in\mtn$.
\end{proof}

\begin{remark}
The information that:\begin{enumerate}[label=(\roman*)]
\item $\sigma(M)$ is discrete and negative,
\item $(\lambda,\mu)\in\sigma_E(M) \implies d^2\mu+d(\lambda-\mu)\in \sigma(M)$ for all $d\in \mtn$,
\end{enumerate}
is not enough to show that the range of $\mu$-values in $\sigma_E(M)$ is discrete. Indeed, consider the set 
\begin{align}
    \label{set}
\left\{ \left(-d(n+(d-1)(1-\frac1n)), -d^2(1-\frac1n)\right)\,\middle|\, n,d\in\mtn\right\},
\end{align}
which satisfies both of these conditions, but the $\mu$-values are not discrete.
Note that (\ref{set}) is simply a set, its not clear that it arises from the spectrum of the Laplace-Beltrami operator on a compact manifold.
\end{remark}

\section{Existence of eigenfunctions}
\label{sec: existence}
In this section we briefly investigate questions of existence of globally defined eigenfamilies and eigenfunctions.

We begin with the observation that eigenfamilies do not exist at all on generic metrics:

\pasttheorem{thm: generic}

\begin{remark}
We say that a property is generic if it holds on a countable intersection of open dense sets. For Theorem\,\ref{thm: generic} we equip the set of Riemannian metrics with the $C^\infty$-topology.
\end{remark}
\begin{proof}[Proof of Theorem\,\ref{thm: generic}]
By work of Uhlenbeck \cite{uhlenbeck-72, uhlenbeck-76} the property that all eigenspaces of the Laplace operator of a metric $g$ are one-dimensional is generic on the set of Riemannian metrics on $M$. Supposing $f=f_1+if_2:(M,g)\to\mtc$ is a non-zero $(\lambda,\mu)$-eigenfunction, one notes from $\lambda\in\mtr$ that $f_1, f_2$ lie in a common eigenspace of the Laplacian. Since $f_1,f_2$ are linearly independent, e.g. by Theorem\,\ref{thm: l2-family} below, the theorem follows. 
\end{proof}

The Riemannian manifolds for which $(\lambda,\mu)$-eigenfamilies and eigenfunctions are known are rather special, indeed most are homogeneous or even symmetric spaces. In what follows we will show that any closed Sasakian manifold admits a large amount of global eigenfunctions:

\pasttheorem{thm: sasaki}
\begin{remark}
One can show additionally that $\lambda_i = -\mu_i^2 -\sqrt{-\mu_i}(\dim(M)-1)$ and $\kappa(f_i, f_j) = -\sqrt{\mu_i\mu_j}\, f_i f_j$.
\end{remark}

Sasakian manifolds form a rich and much studied class in Riemannian geometry, see e.g. \cite{blair-10, boyer-galicki-08} for an introduction. For the purpose of this article the most important fact is the following characterisation of when a Riemannian manifold admits a compatible Sasaki structure:
\begin{proposition}\label{prop: sasaki-cone}
A Riemannian manifold $(M,g)$ is of Sasaki type if and only if the cone $(\mtr_{>0}\times M, dt^2+t^2g)$ admits a K\"ahler structure for which the complex unit is dilation invariant.
\end{proposition}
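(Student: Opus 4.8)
The plan is to read the statement as the classical correspondence between Sasaki structures on $(M,g)$ and Kähler structures on its metric cone, and to keep careful track of where the dilation-invariance clause enters. I would fix the standard contact-metric definition of a Sasaki structure, namely a quadruple $(\phi,\xi,\eta,g)$ on $M$ in which $\xi$ is a unit Killing field, $\eta=g(\xi,\cdot)$, $\phi=-\nabla\xi$, and the normality condition $N_\phi+2\,d\eta\otimes\xi=0$ holds, and then set up the cone $C(M)=(\mtr_{>0}\times M,\ dt^2+t^2g)$ together with its Euler field $E=t\,\partial_t$, whose flow is exactly the dilation $\delta_s\colon(t,x)\mapsto(st,x)$. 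Interpreting ``the complex unit is dilation invariant'' as $(\delta_s)_*J=J(\delta_s)_*$, the proof then splits into the two implications, with essentially all of the geometry encoded in the single cone vector field $\xi\defeq JE$.

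For the direction assuming the cone is Kähler with dilation-invariant $J$, I would first record the elementary consequences of the hypotheses: $\bar g(E,E)=t^2$, so since $J$ is a $\bar g$-isometry one gets $\bar g(\xi,\xi)=t^2$ and $\bar g(\xi,E)=\omega(E,E)=0$; hence $\xi$ is everywhere tangent to the slices $\{t\}\times M$, and dilation invariance of both $E$ and $J$ forces $\xi$ to be $\delta_s$-invariant. Restricting to $t=1$ therefore yields a genuine unit vector field $\xi$ on $M$, from which I would build $\eta$ and $\phi$. The remaining task is to show that the Kähler conditions upstairs are equivalent to the Sasaki conditions downstairs: closedness of the Kähler form $\omega$ together with the warped-product metric should reproduce the contact-metric compatibility, while the vanishing of the Nijenhuis tensor of $J$ should reproduce the normality condition.

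For the converse I would build $J$ on the cone explicitly from a given Sasaki structure by declaring $J(t\,\partial_t)=\xi$, $J\xi=-t\,\partial_t$, and $J=\phi$ on the contact distribution $\ker\eta$, extending in the obvious $t$-independent way so that dilation invariance holds by construction. One verifies $J^2=-\mathrm{Id}$ and $\bar g$-compatibility directly, and then checks $d\omega=0$ and integrability of $J$; again these reduce to the contact and normality conditions on $M$ via the standard relation between the Levi-Civita connection $\bar\nabla$ of the cone and $\nabla$ of the warped metric $dt^2+t^2g$.

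The main obstacle, and the only genuinely computational part, is matching the integrability of $J$ with the normality condition $N_\phi+2\,d\eta\otimes\xi=0$. I would handle this by decomposing arbitrary cone vector fields along the three distinguished directions --- the Euler direction $E$, the Reeb direction $\xi$, and the contact distribution $\ker\eta$ --- and computing the Nijenhuis tensor of $J$ block by block, using the warped-product connection to convert Lie brackets on the cone into the structure tensors of $M$. Since this equivalence is classical, I would cite \cite{boyer-galicki-08} for the detailed tensor identities, contributing here only the explicit bookkeeping showing that the dilation-invariance clause is precisely what forces the constructed $\xi$ and $J$ to be independent of the cone parameter, so that they descend to well-defined structures on $M$.
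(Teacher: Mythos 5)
The paper never proves this proposition: it is stated as the known characterisation of Sasaki manifolds --- many authors take it as the definition --- with \cite{boyer-galicki-08} cited in the immediately preceding sentence, and it is then used as a black box in the proof of Theorem~\ref{thm: sasaki}. Your reconstruction is therefore being compared against a citation, not an argument, and as a sketch it is sound and follows the standard route: the Euler field $E=t\,\partial_t$, the candidate Reeb field $\xi=JE$, the observations $\bar g(\xi,E)=\omega(E,E)=0$ and $\bar g(\xi,\xi)=t^2$ making $\xi$ slice-tangent and of unit length on $\{1\}\times M$, and, conversely, the dilation-invariant extension of $(\phi,\xi,\eta)$ to an almost complex structure $J$ on the cone with K\"ahler form $\omega=\tfrac12\,d(t^2\eta)$, whose closedness is then automatic. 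What your write-up buys that the bare citation does not is an explicit account of the dilation-invariance clause: as you note, $(\delta_s)_*E=E$ together with $(\delta_s)_*J=J(\delta_s)_*$ is exactly what makes $\xi=JE$ and the induced tensors descend to $M$, and this hypothesis is the part of the statement most often left implicit in textbook formulations. The one caveat is that the crux --- the equivalence of $N_J=0$ with the normality condition $N_\phi+2\,d\eta\otimes\xi=0$, and of $d\omega=0$ with the contact-metric compatibility --- is deferred to the tensor computations in \cite{boyer-galicki-08}; this leaves your argument at the same level of rigour as the paper itself (which defers everything), so it is acceptable as a proof of the proposition in context, but a self-contained proof would still need the block-by-block Nijenhuis computation you describe only in outline.
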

\begin{remark}
\begin{enumerate}
\item A Sasaki structure on a manifold $M$ is most often declared by a quadruple $(g,\Phi,\eta,\xi)$, were $g$ is a Riemannian metric, $\Phi$ a CR-structure, $\eta$ a contact form, and $\xi$ the Reeb field of $\eta$. These tensors must be compatible with each other in a suitable sense. A convenient list of relations can be found e.g. in \cite{tanno-69}.
\item If $(g,\Phi,\eta,\xi)$ is a Sasaki structure on $M$ then the complex unit $J:T(\mtr_{>0}\times M)\to T(\mtr_{>0}\times M)$ from Proposition~\ref{prop: sasaki-cone} is given by
\begin{equation}
J=\Phi-t\partial_t\otimes\eta+\frac\xi t\otimes dt.\label{eq: J-cone}
\end{equation}
Here $t$ denotes the coordinate on the $\mtr_{>0}$ factor. The complex unit $J$ is dilation invariant in the sense that $L_{t\partial_t}J=0$, where $L_{t\partial t}$ denotes the Lie derivative by $t\partial_t$.
\end{enumerate}
\end{remark}

The key step to show Theorem\,\ref{thm: sasaki} is a characterisation of $(\lambda,\mu)$-eigenfamilies on $(M,g)$ by $(0,0)$-eigenfamilies on an appropriate cone, similar to Theorem 1 of \cite{riedler-23}. This result, Corollary\,\ref{cor: cones}, is of independent interest. We begin with the following definitions:

\begin{defn}Let $(M,g)$ be a Riemannian manifold of dimension $m\geq 2$.
\begin{enumerate}
\item Let $s>0$. The \emph{cone of slope $1/s$ over $M$} is defined to be:
$$C_s(M) \defeq (\mtr_{>0}\times M, s\, dt^2 +t^2 g).$$
\item We say that a pair $(\lambda,\mu)\in\mtc^2$ is \emph{conical} if $\lambda\neq\mu$ and $\frac{-\mu}{(\mu-\lambda)^2} >0$. In this case define:
\begin{equation}
s(\lambda,\mu) = \frac{-\mu (m-1)}{(\lambda-\mu)^2} , \qquad d(\lambda,\mu) = \frac{\mu(m-1)}{\lambda-\mu}.\label{eq: s,d}
\end{equation}
\end{enumerate}
\end{defn}

\begin{remark}
If $\lambda\neq\mu$ are both real and $-\mu>0$ then $(\lambda,\mu)$ is conical. In particular if $(M,g)$ is closed and $f:M\to\mtc$ is a $(\lambda,\mu)$-eigenfunction with $\lambda\neq\mu$, then $(\lambda,\mu)$ is conical.
\end{remark}

The following is a calculation, which we omit:

\begin{lemma}\label{lemma: delta-kappa-cone}
Let $(M,g)$ be a Riemannian manifold, $s\in\mtr_{>0}$, $f,h:M\to \mtc$ smooth. For $d\in\mtc$ define $f_d:M\times \mtr_{>0}\to\mtc$, $(x,t)\mapsto t^d f(x)$ and $h_d$ similarly. Then:
\begin{align*}
(\Delta_{C_s(M)} f_d)\,(x,t) &= t^{d-2}\left(\Delta_M f(x)+\frac{d(\dim(M)+d-1)}{s}f(x)\right), \\
\kappa_{C_s(M)}( f_d,h_d)\,(x,t) &= t^{2d-2}\left(\kappa_M(f,h)(x) + \frac{d^2}s f(x)h(x)\right).
\end{align*}
\end{lemma}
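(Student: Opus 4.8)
The plan is to work in local product coordinates on the cone and to exploit the fact that the warped-product metric $\tilde g = s\,dt^2 + t^2 g$ is block-diagonal, so that both operators split cleanly into a radial piece and a horizontal piece.

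First I would fix local coordinates $x^1,\dots,x^m$ on $M$ together with the coordinate $t$ on $\mtr_{>0}$, and record the components of $\tilde g$. In these coordinates $\tilde g_{tt}=s$, $\tilde g_{ti}=0$, and $\tilde g_{ij}=t^2 g_{ij}$; consequently $\tilde g^{tt}=1/s$, $\tilde g^{ti}=0$, $\tilde g^{ij}=t^{-2}g^{ij}$, and the volume density is $\sqrt{|\tilde g|}=\sqrt{s}\,t^m\sqrt{|g|}$. Because $\tilde g$ has no cross terms between $\partial_t$ and $TM$, every index sum appearing in the local formulas for $\Delta$ and $\kappa$ separates into a $t$-contribution and an $M$-contribution.

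Next I would insert these data into the coordinate expression for the Laplace-Beltrami operator. Since $t$ is independent of the $x^i$ and $\sqrt{|g|}$ is independent of $t$, the horizontal part collapses to $t^{-2}\Delta_M$, while the radial part reduces to the one-dimensional operator $\tfrac{1}{s\,t^m}\partial_t(t^m\partial_t\,\cdot\,)$. Applying the resulting operator to $f_d(x,t)=t^d f(x)$ and using $\partial_t\bigl(t^m\partial_t t^d\bigr)=d(m+d-1)t^{m+d-2}$ yields exactly the stated formula for $\Delta_{C_s(M)}f_d$. The conformality operator is handled identically: the same block-diagonal splitting gives $\kappa_{C_s(M)}(\varphi,\psi)=\tfrac1s\,\partial_t\varphi\,\partial_t\psi + t^{-2}\kappa_M(\varphi,\psi)$, and substituting $\varphi=f_d$, $\psi=h_d$ with $\partial_t t^d = d\,t^{d-1}$ produces the claimed expression.

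As the statement is acknowledged to be a mechanical verification, there is no genuine conceptual obstacle; the only point requiring care is the bookkeeping of the powers of $t$ together with the factor $s$, and in particular getting the volume density $\sqrt{|\tilde g|}=\sqrt{s}\,t^m\sqrt{|g|}$ right, so that the radial part of $\Delta$ emerges with the correct exponent $m$ and hence the coefficient $d(m+d-1)/s$ (with $m=\dim M$). The cleanness of the final formulas is entirely a reflection of the warped-product form of $\tilde g$, which decouples the radial and horizontal directions from the outset.
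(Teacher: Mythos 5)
Your computation is correct: the block-diagonal form of $s\,dt^2+t^2g$ gives $\tilde g^{tt}=1/s$, $\tilde g^{ij}=t^{-2}g^{ij}$, $\sqrt{|\tilde g|}=\sqrt{s}\,t^m\sqrt{|g|}$, and hence $\Delta_{C_s(M)}=\tfrac{1}{s\,t^m}\partial_t(t^m\partial_t\,\cdot\,)+t^{-2}\Delta_M$ and $\kappa_{C_s(M)}(\varphi,\psi)=\tfrac1s\partial_t\varphi\,\partial_t\psi+t^{-2}\kappa_M(\varphi,\psi)$, which applied to $t^df(x)$ and $t^dh(x)$ yield exactly the stated formulas. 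The paper omits this proof entirely (``The following is a calculation, which we omit''), and your direct coordinate verification is precisely the intended, standard argument.
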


\begin{corollary}\label{cor: cones}
A family $\mathcal F$ of homogeneous degree $d$ maps from $C_s(M)$ to $\mtc$ is a $(0,0)$-eigenfamily if and only if $\mathcal F\lvert_{\{1\}\times M}$ is a $(-\frac{d(m+d-1)}s,-\frac{d^2}s)$-eigenfamily.
\end{corollary}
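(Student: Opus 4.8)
The plan is to read the statement off directly from Lemma~\ref{lemma: delta-kappa-cone}. By definition a homogeneous degree $d$ map $F\colon C_s(M)\to\mtc$ has the form $F=f_d$, that is $F(x,t)=t^d f(x)$, where $f=F\lvert_{\{1\}\times M}\colon M\to\mtc$; conversely, restricting to the slice $\{1\}\times M$ recovers $f$ from $F$. Hence, writing $f_i=F_i\lvert_{\{1\}\times M}$, the assignment $\mathcal F=\{F_i\}\mapsto \mathcal F\lvert_{\{1\}\times M}=\{f_i\}$ is a bijection between families of homogeneous degree $d$ maps on $C_s(M)$ and families of smooth maps on $M$, and it is this correspondence whose eigenfamily properties are to be matched up. Note that it is essential here that all members of $\mathcal F$ share the same homogeneity degree $d$, since the conformality formula of Lemma~\ref{lemma: delta-kappa-cone} is stated for two functions of equal degree.

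First I would insert each $F_i=(f_i)_d$ into the Laplacian formula of Lemma~\ref{lemma: delta-kappa-cone}, obtaining
\[
(\Delta_{C_s(M)}F_i)(x,t)=t^{d-2}\Big(\Delta_M f_i(x)+\tfrac{d(m+d-1)}{s}\,f_i(x)\Big).
\]
Since $t>0$ on the cone, $t^{d-2}\neq 0$ for every $t\in\mtr_{>0}$, so this expression vanishes identically on $C_s(M)$ precisely when the bracket vanishes identically on $M$, i.e.\ when $\Delta_M f_i=-\tfrac{d(m+d-1)}{s}f_i$. Thus the Laplace condition ``$\Delta_{C_s(M)}F_i=0$ for all $i$'' is equivalent to every $f_i$ being a $\lambda$-eigenfunction of $\Delta_M$ with $\lambda=-\tfrac{d(m+d-1)}{s}$.

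Next I would repeat this with the conformality formula. For each pair $i,j$ Lemma~\ref{lemma: delta-kappa-cone} gives
\[
\kappa_{C_s(M)}(F_i,F_j)(x,t)=t^{2d-2}\Big(\kappa_M(f_i,f_j)(x)+\tfrac{d^2}{s}\,f_i(x)f_j(x)\Big),
\]
and again $t^{2d-2}\neq 0$ lets me pass between the cone and $M$: ``$\kappa_{C_s(M)}(F_i,F_j)=0$ for all $i,j$'' is equivalent to ``$\kappa_M(f_i,f_j)=-\tfrac{d^2}{s}f_if_j$ for all $i,j$'', which is exactly the conformality condition for $\mu=-\tfrac{d^2}{s}$. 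Combining the two equivalences gives the corollary. There is no genuine obstacle here: the entire analytic content sits in Lemma~\ref{lemma: delta-kappa-cone}, and the only point requiring (minimal) care is the elementary observation that, because the powers of $t$ never vanish on $\mtr_{>0}$, an identity between homogeneous functions on the cone holds if and only if the corresponding identity holds on the slice $M$.
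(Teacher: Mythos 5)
Your proof is correct and is exactly the argument the paper intends: the corollary is stated without proof precisely because it follows from Lemma~\ref{lemma: delta-kappa-cone} in the way you describe, namely writing each homogeneous map as $t^d f(x)$ with $f=F\lvert_{\{1\}\times M}$ and cancelling the nonvanishing factors $t^{d-2}$, $t^{2d-2}$. Your added remarks (the bijection between homogeneous maps and their restrictions, and the need for a common degree $d$) are accurate but routine, so there is nothing to flag.
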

\begin{remark}
If $(\lambda,\mu)$ are conical then $(-\frac{d(m+d-1)}s,-\frac{d^2}s)=(\lambda,\mu)$, where $d=d(\lambda,\mu)$, $s=s(\lambda,\mu)$ from equation (\ref{eq: s,d}). In particular for $(\lambda,\mu)$ conical all $(\lambda,\mu)$-eigenfamilies arise as the restriction of a homogeneous $(0,0)$-eigenfamily on an appropriate cone.
\end{remark}
\begin{proof}[Proof of Theorem\,\ref{thm: sasaki}]
We make use of an embedding theorem for Sasaki manifolds due to Ornea and Verbitsky (cf. Theorem 1.2 of \cite{ornea-verbitsky-07}), see also the version of van Coevering (Theorem 3.1 in \cite{vancoevering-11}). The theorem gives a holomorphic embedding  $(f_1,...,f_N):C_1(M)\to \mtc^N$ for some $N$, so that each component $f_i$ is homogeneous of degree $d_i$ for some $d_i$. Here holomorphic means with respect to the K\"ahler structure coming from Proposition\,\ref{prop: sasaki-cone}, i.e. the one given by equation (\ref{eq: J-cone}).

Since every holomorphic function to $\mtc$ from a K\"ahler manifold is automatically a $(0,0)$-eigenfunction, see e.g. Corollary 8.1.9 of \cite{baird-wood-book}, an application of Corollary\,\ref{cor: cones} concludes the proof.\end{proof}
\begin{remark}
If $f_1, f_2$ are holomorphic functions on a K\"ahler manifold $(M,g,J)$ one notes that $\kappa(f_1,f_2)=0$, since the gradients of $f_1, f_2$ take values in the anti-holomorphic subspace of $T^\mtc M$, which is isotropic with respect to $g$. Lemma\,\ref{lemma: delta-kappa-cone} then implies $\kappa(f_i, f_j)= -\sqrt{\mu_i\mu_j}f_if_j$ for the functions $f_1,...,f_N$ constructed in the proof of Theorem\,\ref{thm: sasaki}. In particular if the degrees of the $f_i$ all agree then $\{f_1,...,f_N\}$ is  a $(\lambda,\mu)$-eigenfamily.
\end{remark}

\begin{example}\label{example: sasaki-space-form}
We apply the construction of Theorem~\ref{thm: sasaki} to a family of Sasaki space forms. Let $(g\subs{round},\Phi,\eta,\xi)$ denote the standard Sasaki structure on the unit sphere $S^{2n-1}\subset\mtc^n$. For $\alpha>0$ let
$$g_\alpha\defeq\alpha\,g\subs{round} +(\alpha^2-\alpha) \eta\otimes\eta.$$
Then $(g_\alpha, \Phi,\alpha\eta,\frac1\alpha\xi)$ is a Sasaki structure on $S^{2n-1}$ for which the so-called $\phi$-sectional curvature is constant and equal to $\frac4\alpha-3$, i.e. a Sasaki space form, cf. \cite{blair-10, tanno-69}. Note that the metric $g_\alpha$ is a constant multiple of the metric of a Berger sphere.

With respect to the canonical identification of the cone of $S^{2n-1}$ with $\mtc^n\setminus\{0\}$, the family of maps
$$\left\{f_j:\mtc^n\setminus\{0\}\to\mtc, \quad  (z_1,...,z_n)\mapsto  \|(z_1,...,z_n)\|^{\frac{1}\alpha-1}\cdot z_j\, \mid \,  j\in\{1,...,n\}\,\right\}$$
is holomorphic for the complex structure (\ref{eq: J-cone}) induced by the deformed Sasaki structure $(g_\alpha,\Phi,\alpha\,\eta,\frac1\alpha\xi)$ on $S^{2n-1}$. Since the maps are homogeneous of degree $\frac1\alpha$ the family restricts to a $(-\frac1{\alpha^2}-\frac{2n-2}\alpha,-\frac1{\alpha^2})$-eigenfamily on $(S^{2n-1},g_\alpha)$.
\end{example}

\section{Eigenfamilies on the flat tori}\label{sec: flat-tori}
In this section we classify globally defined eigenfamilies on flat tori.

\smallskip

Let $\Gamma$ be a lattice in $\mtr^n$ and $M=\mtr^n/\Gamma$ a flat torus. 
Recall that the \textit{dual lattice $\Gamma^*$ of $\Gamma$} is equal to $$\Gamma^*=\{ k'\in\mtr^n \mid \langle k,k'\rangle\in\mtz \ \forall k\in\Gamma\}.$$
The Laplace-eigenvalues are parametrized by the dual lattice $\Gamma^*$, i.e.
we have 
\begin{align*}
   \sigma(\Gamma)\defeq\{-4\pi^2 \|k'\|^2\mid k'\in\Gamma^*\},
\end{align*}
see e.g. \cite{berger-spectrum}.
Further, for $\lambda\in\sigma(\Gamma)$ introduce the set $B(\lambda)$ by 
\begin{align*}
B(\lambda):=\{k'\in\Gamma^*\mid -4\pi^2\|k'\|^2=\lambda\}.    
\end{align*}

\begin{proposition}[see e.g. \cite{berger-spectrum}]
We have $\sigma(M)=\sigma(\Gamma)$ and the Laplace-eigenspace corresponding to $\lambda\in\sigma(\Gamma)$ is spanned by
$$E_\Delta(\lambda)=\{M\to\mtc ,\quad x\mapsto \exp(2\pi i \langle k', x\rangle)\mid k'\in B(\lambda)\}.$$
\end{proposition}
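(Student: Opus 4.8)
The plan is to reduce the statement to the completeness of the Fourier system $\{e_{k'}\}_{k'\in\Gamma^*}$, where $e_{k'}\colon M\to\mtc$ denotes $x\mapsto\exp(2\pi i\langle k',x\rangle)$, and then to extract the two assertions from standard spectral theory.

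First I would check that $e_{k'}$ descends to a well-defined function on $M=\mtr^n/\Gamma$ precisely when $k'\in\Gamma^*$: since $e_{k'}(x+k)=e^{2\pi i\langle k',k\rangle}e_{k'}(x)$, invariance under the $\Gamma$-action amounts to $\langle k',k\rangle\in\mtz$ for all $k\in\Gamma$, which is the defining condition of $\Gamma^*$. A direct computation with the flat metric, using the coordinate formula for $\Delta$ from Section\,\ref{sec-1}, then gives $\Delta e_{k'}=-4\pi^2\|k'\|^2 e_{k'}$. Hence every $e_{k'}$ is a Laplace-eigenfunction with eigenvalue $-4\pi^2\|k'\|^2$, which already establishes $\sigma(\Gamma)\subseteq\sigma(M)$ and $E_\Delta(\lambda)\subseteq\{\psi\mid\Delta\psi=\lambda\psi\}$ for each $\lambda\in\sigma(\Gamma)$.

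The heart of the argument is the reverse inclusions, for which I would invoke the completeness of $\{e_{k'}\}_{k'\in\Gamma^*}$ as an orthogonal basis of $L^2(M)$. To establish this I would note that the complex span of the $e_{k'}$ is a subalgebra of $C(M)$ that contains the constants, is closed under complex conjugation (as $\overline{e_{k'}}=e_{-k'}$ and $-k'\in\Gamma^*$), and separates the points of $M$; the Stone--Weierstrass theorem then yields density in $C(M)$, and hence in $L^2(M)$. Orthogonality of distinct $e_{k'}$ is an elementary integral computation. With completeness in hand, $\sigma(M)=\sigma(\Gamma)$ follows: any eigenfunction whose eigenvalue lies outside $\sigma(\Gamma)$ is orthogonal to every $e_{k'}$ (eigenfunctions of the self-adjoint operator $\Delta$ for distinct eigenvalues are orthogonal), and completeness forces it to vanish.

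Finally, for the eigenspace description I would take an arbitrary $\psi$ with $\Delta\psi=\lambda\psi$ and expand it in the Fourier basis, $\psi=\sum_{k'\in\Gamma^*}c_{k'}e_{k'}$. Since $\Delta$ is elliptic, $\psi$ is smooth and its Fourier series may be differentiated termwise; comparing coefficients in $\Delta\psi=\lambda\psi$ gives $c_{k'}(-4\pi^2\|k'\|^2-\lambda)=0$, so that $c_{k'}=0$ whenever $k'\notin B(\lambda)$. Thus $\psi$ lies in the span of $\{e_{k'}\mid k'\in B(\lambda)\}=E_\Delta(\lambda)$. The main obstacle is the completeness step: everything else is bookkeeping, whereas completeness is exactly what rules out ``hidden'' eigenvalues or eigenfunctions beyond the exponentials. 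I would resolve it via Stone--Weierstrass as above, though one could alternatively cite the general fact that the eigenfunctions of the Laplacian on a compact manifold form a complete orthonormal system of $L^2(M)$.
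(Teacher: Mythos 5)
The paper does not actually prove this proposition: it is quoted as a standard fact with a pointer to \cite{berger-spectrum}, so there is no internal argument to compare yours against. Your blind proof is correct and self-contained, and it is essentially the classical argument that the cited reference gives: the exponentials $e_{k'}$ with $k'\in\Gamma^*$ are exactly the characters that descend to $M=\mtr^n/\Gamma$, each is a Laplace eigenfunction with eigenvalue $-4\pi^2\|k'\|^2$, completeness of this orthogonal system in $L^2(M)$ (via Stone--Weierstrass) excludes any further eigenvalues, and expanding an eigenfunction in the basis identifies the eigenspaces. Three small points would deserve a line each in a full write-up, though none is a genuine gap. First, that $\spn_\mtc\{e_{k'}\}$ separates points of $\mtr^n/\Gamma$ is precisely the biduality $(\Gamma^*)^*=\Gamma$ of full-rank lattices; it is standard, but it is the one place where the lattice structure genuinely enters, so it should be stated. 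Second, in the last step you pass from the vanishing of the coefficients $c_{k'}$ for $k'\notin B(\lambda)$ to membership of $\psi$ in the algebraic span of $\{e_{k'}\mid k'\in B(\lambda)\}$; this uses that $B(\lambda)$ is finite (a bounded subset of the discrete set $\Gamma^*$), so that this span is finite-dimensional and hence closed in $L^2$. Third, rather than invoking smoothness and termwise differentiation of the Fourier series, it is slightly cleaner to pair $\Delta\psi=\lambda\psi$ against $e_{k'}$ and use the symmetry of $\Delta$ on the closed manifold $M$ to obtain $(\lambda+4\pi^2\|k'\|^2)\,c_{k'}=0$ directly; this avoids any regularity discussion beyond what ellipticity already gives.
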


The fact that any $(\lambda,\mu)$-eigenfamily is contained in the $\lambda$-eigenspace of the Laplacian allows us to classify globally defined eigenfamilies on flat tori.

\pasttheorem{thm: torus}

\begin{proof}
The first point is immediate, if $f=\exp(2\pi i \langle k',x\rangle)\in E_\Delta(\gamma)$ then:
$$\kappa(f,f)=-4\pi^2 \langle k',k'\rangle \exp(2\pi i \langle k'+k',x\rangle) = \lambda f^2.$$
For the second point, if $\mathcal F$ is a $(\lambda,\mu)$-eigenfamily and $f\in\spn_\mtc(\mathcal F)$, then there are $k'_i\in B(\lambda)$ and $a_i\in\mtc$ so that
$$f=\sum_i a_i \exp(2\pi i\langle k'_i, x\rangle).$$
One necessarily has that $\Delta f^2=2(\mu+\lambda)f^2$, in this case:
\begin{align*}
2(\lambda+\mu)f^2 &=2(\lambda+\mu) \sum_{ij}a_ia_j \exp(2\pi i\langle k_i'+k_j',x\rangle)\\&=\Delta f^2 = -4\pi^2\sum_{ij} a_i a_j \|k_i'+k_j'\|^2 \exp(2\pi i\langle k_i'+k_j',x\rangle).
\end{align*}

If there is a single $a_i\neq0$ in the sum, then there is a coefficient of the form $$-4 a_i^2 \lambda \exp(2\pi i\langle 2k_i',x\rangle)$$ in the sum. In fact since the $k_i$ all have the same norm this is the only way to achieve an $\exp(2\pi i\langle 2k_i',x\rangle)$ factor and it cannot be cancelled. Comparing coefficients then implies that $\mu=\lambda$. Finally $\mu=\lambda$ yields that $\|k_i'+k_j'\|^2=4\|k_i'\|^2$ for all $i,j$. This in turn implies that $k_i'=k_j'$ for all $i,j$, and then $f$ is proportional to an element of $E_\Delta(\lambda)$.

Since all elements of $\spn_\mtc(\mathcal F)$ are proportional to an element of $E_\Delta(\lambda)$ and $E_\Delta(\lambda)$ is a finite set, this implies that $\mathcal F$ is spanned by a single element.
\end{proof}

\section{Topological consequences of the existence of eigenfamilies}
\label{sec-top}

In this section we derive some topological consequences from the existence of a $(\lambda,\mu)$-eigenfunction $f:M\to\mtc$ on a closed manifold $M$. 

The case $\lambda=\mu$ is somewhat singular, in this case the modulus $|f|^2$ becomes constant (cf. Proposition\,\ref{prop-modulus}), and the existence of a $(\lambda,\lambda)$-eigenfunction quickly leads to strong restrictions on the geometry and topology of $M$ (cf. Theorem\,\ref{thm: lambda=mu}).

The key point is to consider the \textit{polar form} $f=|f|e^{ih}$ of a $(\lambda,\mu)$-eigenfunction. One quickly derives strong relations for $\Delta$ and $\kappa$ when  applied to $h$ and $|f|$, whenever the function $h$ is locally defined on some open subset $U\subset M$:

\begin{lemma}\label{lemma: h-identities}Let $(U,g)$ be a Riemannian manifold, not necessarily compact or complete, and let $f:U\to \mtc$ be a $(\lambda,\mu)$-eigenfunction with $\lambda,\mu$ both real and $f(x)\neq0$ for all $x\in U$. Suppose $f(x)=e^{ih(x)}|f(x)|$ for some smooth function $h:U\to\mtr$. Then:
\begin{enumerate}[label=(\roman*)]
\item $\Delta h=0$;
\item $\Delta \ln|f|=\lambda-\mu$;
\item $\kappa(h,|f|)=0$;
\item $\kappa(\ln|f|,\ln|f|)=\kappa(h,h)+\mu$;
\item $\Delta |f|^2 = 4\kappa(|f|,|f|)+2(\lambda-\mu)|f|^2$.
\end{enumerate}
\end{lemma}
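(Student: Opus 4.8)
The plan is to write $f = e^{ih}|f|$ and simply feed this into the two defining equations $\Delta f = \lambda f$ and $\kappa(f,f)=\mu f^2$, then separate real and imaginary parts. Every identity (i)--(v) should fall out of these two by algebraic manipulation, using the product rule (\ref{equation-basic}) for $\Delta$ and the fact that $\kappa$ is a complex-bilinear extension of $g(\nabla\cdot,\nabla\cdot)$. Throughout I would set $u = \ln|f|$, so that $f = e^{ih+u}$ and $\nabla f = (i\nabla h + \nabla u)f$; this exponential form makes the gradient computations clean. Note that since $f$ is nowhere zero on $U$, the functions $h$ and $u$ are genuinely smooth and $f^2 = e^{2(ih+u)}$ is nonvanishing, so dividing by $f$ or $f^2$ is legitimate.

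\textbf{The conformality equation.} With $\nabla f = (i\nabla h + \nabla u)f$ I compute
\begin{equation*}
\kappa(f,f) = g(\nabla f, \nabla f) = \bigl(i\nabla h + \nabla u\bigr)\cdot\bigl(i\nabla h + \nabla u\bigr)\, f^2 = \bigl(\kappa(u,u) - \kappa(h,h) + 2i\,\kappa(h,u)\bigr)f^2.
\end{equation*}
Setting this equal to $\mu f^2$ and dividing by the nowhere-zero $f^2$, the imaginary part gives $\kappa(h,u)=0$, which is (iii) (equivalently $\kappa(h,|f|)=0$ since $\nabla u$ and $\nabla|f|$ are parallel); the real part gives $\kappa(u,u) - \kappa(h,h) = \mu$, which is exactly (iv).

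\textbf{The Laplace equation.} For $\Delta f$ I would use that for any smooth $\phi$, $\Delta(e^\phi) = (\Delta\phi + \kappa(\phi,\phi))e^\phi$ (a direct consequence of (\ref{equation-basic}) applied to $e^\phi\cdot e^\phi$, or of the local formula). Applying this with $\phi = ih+u$ gives
\begin{equation*}
\Delta f = \bigl(\Delta(ih+u) + \kappa(ih+u,\,ih+u)\bigr)f = \bigl(i\Delta h + \Delta u + \kappa(u,u)-\kappa(h,h) + 2i\,\kappa(h,u)\bigr)f.
\end{equation*}
Equating with $\lambda f$ and dividing by $f$: the imaginary part reads $\Delta h + 2\kappa(h,u)=0$, and by (iii) the second term vanishes, yielding $\Delta h = 0$, which is (i). The real part reads $\Delta u + \kappa(u,u)-\kappa(h,h) = \lambda$; substituting $\kappa(u,u)-\kappa(h,h)=\mu$ from (iv) gives $\Delta u = \lambda - \mu$, which is (ii) since $u = \ln|f|$.

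\textbf{The identity for $|f|^2$.} For (v) I would note that $|f|^2 = e^{2u}$ and apply the same exponential Laplacian formula with $\phi = 2u$: $\Delta(e^{2u}) = (2\Delta u + 4\kappa(u,u))e^{2u}$. Using (ii) for $\Delta u$ and rewriting in terms of $|f|$ via $\kappa(u,u)e^{2u} = \kappa(|f|,|f|)$ (since $\nabla|f| = |f|\nabla u$, hence $\kappa(|f|,|f|) = |f|^2\kappa(u,u)$) gives $\Delta(|f|^2) = 2(\lambda-\mu)|f|^2 + 4\kappa(|f|,|f|)$, which is (v). The one point deserving care is that (v) must not depend on the local choice of $h$: indeed the final expression involves only the globally defined quantities $|f|$, $\lambda$, $\mu$, so once proved on any coordinate patch where $h$ exists it holds on all of $U$. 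I expect the only genuine obstacle to be bookkeeping the bilinear expansion of $\kappa(ih+u, ih+u)$ correctly with the factors of $i$, together with confirming the exponential identity $\Delta(e^\phi)=(\Delta\phi+\kappa(\phi,\phi))e^\phi$; both are routine once the convention that $\kappa$ is \emph{complex-bilinear} (not Hermitian) is kept firmly in mind.
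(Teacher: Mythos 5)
Your proof is correct and is essentially the paper's own argument in mirror image: the paper computes $\Delta(\ln f)$ and $\kappa(\ln f,\ln f)$ with $\ln f=\ln|f|+ih$ and separates real and imaginary parts, while you exponentiate ($f=e^{ih+u}$) and expand $\Delta(e^\phi)$ and $\kappa(e^\phi,e^\phi)$ --- the same identity $\Delta(e^\phi)=(\Delta\phi+\kappa(\phi,\phi))e^\phi$ underlies both. The only cosmetic difference is the order of deductions (you feed (iii), (iv) back into the Laplace equation to isolate (i), (ii), whereas the paper gets (i), (ii) in one stroke from $\Delta(\ln f)=\lambda-\mu$), so there is nothing substantive to add.
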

\begin{proof}
One has locally:
$$\Delta \ln(f) = \frac{\Delta f}{f}-\frac{\kappa(f,f)}{f^2}=\lambda-\mu.$$
On the other hand $\ln(f) = \ln(|f|) +i h$, and so equations (i) and (ii) follow immediately. Further equation (v) follows from $$\Delta |f|^2 = \Delta e^{2\ln|f|} = 2|f|^2\Delta\ln|f| + 4|f|^2 \kappa(\ln|f|,\ln|f|).$$

Finally note that 
$$\kappa(\ln f, \ln f) = \kappa(\ln|f|,\ln|f|)-\kappa(h,h)+\frac{2i}{|f|}\kappa(|f|, h).$$
Since $\kappa(\ln f,\ln f) = \frac{\kappa(f,f)}{f^2}=\mu$, equations (iii) and (iv) also follow.
\end{proof}

As remarked before, in the case $\lambda=\mu$ the image of an eigenfunction degenerates to a circle in $\mtc$:

\begin{proposition}\label{prop-modulus}
Let $(M,g)$ be closed and connected, let $f:M\to \mtc$ be a $(\lambda,\mu)$-eigenfunction, not identically zero. The following are equivalent:
\begin{enumerate}[label=(\roman*)]
\item $\lambda=\mu$.
\item $|f|^2$ is constant.
\item $f(x)\neq0$ for all $x\in M$. 
\end{enumerate}
\end{proposition}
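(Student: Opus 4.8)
My plan is to prove the three statements equivalent by running the cycle (iii)$\Rightarrow$(i)$\Rightarrow$(ii)$\Rightarrow$(iii). Since $M$ is compact, Theorem\,\ref{thm: lambda-mu} guarantees $\lambda,\mu\in\mtr$, which I use throughout. The implication (ii)$\Rightarrow$(iii) is immediate: if $|f|^2\equiv c$ is constant, then $c>0$ because $f\not\equiv0$, so $f$ is nowhere vanishing.

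For (iii)$\Rightarrow$(i), assume $f(x)\neq0$ for all $x\in M$. Then $\ln|f|$ is a single globally defined smooth function on $M$. Near any point we may pick a local branch of $\log f$ and write $f=e^{ih}|f|$, so Lemma\,\ref{lemma: h-identities}(ii) applies locally and yields $\Delta(\ln|f|)=\lambda-\mu$; as the left-hand side is globally defined, this identity in fact holds on all of $M$. Integrating over the closed manifold gives $0=\int_M\Delta(\ln|f|)=(\lambda-\mu)\Vol(M)$, hence $\lambda=\mu$. Note that a \emph{global} angular primitive $h$ need not exist (this is precisely the topological obstruction behind Theorem\,\ref{thm: lambda=mu}), but only the single-valued function $\ln|f|$ enters the argument.

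The heart of the proof is (i)$\Rightarrow$(ii), where I would work with the globally smooth function $u\defeq|f|^2=f\bar f$ to avoid any zero-set restriction. Using $\lambda,\mu\in\mtr$ one has $\Delta\bar f=\lambda\bar f$ and $\kappa(\bar f,\bar f)=\mu\bar f^2$, while $\kappa(f,\bar f)=|\nabla f_1|^2+|\nabla f_2|^2=:S\geq0$. The fundamental identity (\ref{equation-basic}) then gives $\Delta u=2\lambda u+2S$, and expanding $\nabla u=\bar f\,\nabla f+f\,\nabla\bar f$ produces $|\nabla u|^2=\bar f^2\kappa(f,f)+2u\,\kappa(f,\bar f)+f^2\kappa(\bar f,\bar f)=2\mu u^2+2uS$. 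Subtracting yields the global identity
$$u\,\Delta u-|\nabla u|^2=2(\lambda-\mu)\,u^2,$$
valid at every point of $M$. When $\lambda=\mu$ this reads $u\,\Delta u=|\nabla u|^2$; integrating over $M$ and integrating the left-hand side by parts gives $-\int_M|\nabla u|^2=\int_M u\,\Delta u=\int_M|\nabla u|^2$, whence $\int_M|\nabla u|^2=0$. Therefore $\nabla u\equiv0$ and $u=|f|^2$ is constant on the connected manifold $M$.

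I expect the only delicate point to be insisting that the identities be truly global: the conformality relations should be invoked on $u=f\bar f$ \emph{before} committing to the polar decomposition $f=e^{ih}|f|$, so that the displayed identity for $u\,\Delta u-|\nabla u|^2$ holds even across the zero set of $f$. Once this global identity is in hand, the concluding integration-by-parts (equivalently, a maximum-principle) step is routine, and the reality of $\lambda,\mu$ from Theorem\,\ref{thm: lambda-mu} is exactly what makes $\Delta\bar f$ and $\kappa(\bar f,\bar f)$ behave as needed.
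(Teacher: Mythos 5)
Your proof is correct, and it splits naturally into a part that matches the paper and a part that is genuinely different. The implications (ii)$\implies$(iii) and (iii)$\implies$(i) are essentially the paper's own argument: the paper also observes that a nowhere-vanishing $f$ makes $\ln|f|$ globally smooth with $\Delta(\ln|f|)=\lambda-\mu$ by Lemma\,\ref{lemma: h-identities}(ii), and concludes $\lambda=\mu$ (you integrate over $M$; the paper notes a function with constant Laplacian on a compact manifold must be harmonic, hence constant -- the same mechanism). The interesting divergence is in (i)$\implies$(ii). The paper invokes identity (\ref{eq: delta-|f|^2}), i.e.\ Lemma\,\ref{lemma: h-identities}(v), to conclude that $|f|^2$ is subharmonic when $\lambda=\mu$ and hence constant by compactness. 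That identity, however, is derived in Lemma\,\ref{lemma: h-identities} under the standing hypothesis that $f$ is nowhere zero, so its use on all of $M$ implicitly requires a continuity/density remark across the zero set of $f$ (and the term $\kappa(|f|,|f|)$ is a priori only defined where $f\neq 0$). Your derivation sidesteps this entirely: working with the globally smooth $u=f\bar f$, using only the reality of $\lambda,\mu$ (Theorem\,\ref{thm: lambda-mu}), the conformality relations for $f$ and $\bar f$, and (\ref{equation-basic}), you obtain the pointwise global identity
\begin{equation*}
u\,\Delta u-\kappa(u,u)=2(\lambda-\mu)\,u^2
\end{equation*}
on all of $M$, and then conclude by integration by parts rather than by subharmonicity. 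Both conclusions are valid (the subharmonicity route and Green's identity are interchangeable here), but your version is more self-contained and avoids any delicacy at the zeros of $f$; the paper's version buys brevity by reusing the lemma it needs anyway for the polar-form analysis. One could even note that your identity reproves (\ref{eq: delta-|f|^2}) globally after dividing by $u$ where $u>0$.
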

\begin{proof}
(i)$\implies$(ii) follows from equation (\ref{eq: delta-|f|^2}), since $\lambda=\mu$ implies that $|f|^2$ is subharmonic. For $M$ is compact this is only possible if $|f|^2$ is locally constant.

(ii)$\implies$(iii) is obvious, so we show (iii)$\implies$(i). If $f(x)\neq0$ for all $x\in M$, then $\ln(|f|)$ is well-defined and smooth on $M$. However $\Delta\ln(|f|)=\lambda-\mu$ is constant by Lemma\,\ref{lemma: h-identities}, which is only possible if $\ln(|f|)$ is constant and hence $\lambda=\mu$.
\end{proof}

\begin{remark}
In particular whenever $\lambda\neq\mu$ and $M$ is closed any $(\lambda,\mu)$-eigenfunction $f:M\to\mtc$ must have zeros.
\end{remark}

The flat tori provide examples of manifolds admitting $(\lambda,\mu)$-eigenfunctions with $\lambda=\mu$. In the case of a flat torus the level sets of such an eigenfunction are covered by unions of parallel hyperplanes in $\mtr^n$, in particular they are all totally geodesic. Many properties of this case carry over to the general case:

\pasttheorem{thm: lambda=mu}
\begin{proof}
As noted if $(M,g)$ is closed and $f:M\to\mtc$ is a $(\lambda,\mu)$-eigenfunction with $\lambda=\mu$ then $|f|^2$ is constant, in particular $f$ does not vanish anywhere on $M$.

Let $\theta: M\to S^1$, $x\mapsto \frac{f(x)}{|f(x)|}$. Since $\Delta f=\lambda f$ one finds that $\theta$ is harmonic (as a map with co-domain $S^1$). The existence of such a map implies $\beta_1(M)\neq0$ by standard arguments ($d\theta$ is then a closed $1$-form on $M$, and exactness would imply existence of a non-constant global harmonic function on $M$).

Finally note that locally, for $f=e^{ih}|f|$, the equation
$$0=\kappa(|f|,|f|)=(\kappa(h,h)+\mu)|f|^2$$
implies that the (locally defined) $h$ is a (rescaled) distance function, hence its fibres are equidistant. The condition $\Delta h=0$ then implies that its fibres are also minimal. Since $f(x)=e^{ih(x)}|f|$ where $|f|$ is constant the level sets of $f$ are unions of level sets of $h$, hence point (2) of Theorem\,\ref{thm: lambda=mu} follows.
\end{proof}

As remarked before if $\lambda\neq\mu$ then any $(\lambda,\mu)$-eigenfunction on a closed manifold must admit zeros. We briefly investigate these zeros.

\pasttheorem{thm: |f|-morse}

\begin{proof}
Suppose that $|f|^{-1}(\,(r_1,r_2)\,)$ is non-empty. Let $M_i$ be one of its connected components. If $M_i$ has vanishing first Betti number then there exists a globally defined function $h$ on $M_i$ so that $f=e^{ih}|f|$.

We assume $\lambda\neq\mu$, as else $|f|$ is constant and the result follows from Theorem~\ref{thm: lambda=mu}. By Sard's Theorem the regular values of $|f|$ in $(r_1,r_2)$ have full measure. Furthermore $|f|$ is nowhere locally constant, in particular we can find two distinct regular values $a,b\in |f|(M_i)\cap (r_1,r_2)$. We assume $a<b$.

Denote with $B_i \defeq M_i\cap |f|^{-1}(\{a,b\})$, which is the boundary $M_i\cap |f|^{-1}(\,(a,b)\,)$ and a submanifold of $M_i$. Denoting the normal field of $B_i$ by $N$ one has:
\begin{equation}  
0< \int_{M_i\cap |f|^{-1}(\,(a,b)\,)} \kappa(h,h)\,dV_g = - \int_{M_i\cap |f|^{-1}(\,(a,b)\,)}h \Delta h\,dV_g+ \int_{B_i} h\, g(\nabla h, N)\,dV_g.\label{id-1}  
\end{equation}
Recall that $\Delta h =0$ and that by definition $B_i$ is an open submanifold of $|f|^{-1}(\{a,b\})$. Since $\kappa(h,|f|)=0$ it follows that $\nabla h$ is tangent to $B_i$. Both integrals on the right-hand side of (\ref{id-1}) then vanish, contradicting positivity of $\int_{M_i\cap |f|^{-1}(\,(a,b)\,)}\kappa(h,h)$. The first Betti number of $M_i$ must then be non-zero.
\end{proof}

\begin{remark}
It follows that $\beta_1(M\setminus f^{-1}(\{0\}))\neq0$, in particular $M\setminus f^{-1}(\{0\})$ is not simply connected.
\end{remark}

\begin{corollary}
Let $P:\mtr^m\to\mtc$ be a homogeneous polynomial harmonic morphism (for example $m=2n$ and $P:\mtc^n\to\mtc$ is a holomorphic homogeneous polynomial). Then $\mtr^m\setminus P^{-1}(\{0\})$ is not simply connected.
\end{corollary}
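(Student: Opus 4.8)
The plan is to restrict $P$ to the unit sphere, identify the restriction as a $(\lambda,\mu)$-eigenfunction, and then apply the topological results already established. The case $m=2$ is elementary (a non-constant homogeneous harmonic morphism $\mtr^2\to\mtc$ is $\pm$holomorphic, so $P^{-1}(\{0\})=\{0\}$ and $\mtr^2\setminus\{0\}\simeq\s^1$ is not simply connected), so assume $m\geq3$ and that $P$ is non-constant, say homogeneous of degree $d\geq1$. The starting observation is that a harmonic morphism $P\colon\mtr^m\to\mtc$ is precisely a $(0,0)$-eigenfunction for the flat metric: by the Fuglede--Ishihara characterisation, a harmonic morphism into the surface $\mtc$ is the same as a harmonic, horizontally weakly conformal map, i.e. one with $\Delta P=0$ and $\kappa(P,P)=0$ (for the holomorphic example this is immediate, cf. the remark after Theorem\,\ref{thm: sasaki}).

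Next I would pass to the cone picture. In spherical coordinates the flat metric on $\mtr^m\setminus\{0\}$ is $dt^2+t^2 g_{\s^{m-1}}$, so $\mtr^m\setminus\{0\}$ is isometric to $C_1(\s^{m-1})$, and $P$ corresponds under this isometry to the homogeneous degree $d$ map $(x,t)\mapsto t^d f(x)$ with $f\defeq P|_{\s^{m-1}}$. Since $P$ is a $(0,0)$-eigenfunction, Corollary\,\ref{cor: cones} (applied with base manifold $\s^{m-1}$ of dimension $m-1$ and slope $s=1$) shows that $f$ is a $\big(-d(d+m-2),\,-d^2\big)$-eigenfunction on the compact manifold $\s^{m-1}$. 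Moreover $f\not\equiv0$, since a nonzero homogeneous polynomial cannot vanish identically on the sphere.

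Finally I would transfer the conclusion from $\s^{m-1}$ back to $\mtr^m$. As $P$ is homogeneous, its zero set $P^{-1}(\{0\})$ is scale-invariant, so the radial retraction $x\mapsto x/|x|$ of $\mtr^m\setminus\{0\}$ onto $\s^{m-1}$ restricts to a deformation retraction of $\mtr^m\setminus P^{-1}(\{0\})$ onto $\s^{m-1}\setminus f^{-1}(\{0\})$; in particular the two spaces are homotopy equivalent. By the remark following Theorem\,\ref{thm: |f|-morse} (taking $r_1=0$ and $r_2=\infty$) one has $\beta_1\big(\s^{m-1}\setminus f^{-1}(\{0\})\big)\neq0$, so this space is not simply connected, and therefore neither is $\mtr^m\setminus P^{-1}(\{0\})$. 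I expect the main points needing care to be the clean identification of the harmonic-morphism condition with the $(0,0)$-eigenfunction property, and the verification that the radial retraction genuinely preserves the complement of the cone $P^{-1}(\{0\})$; both are routine but deserve an explicit word.
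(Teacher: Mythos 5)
Your proposal is correct and follows essentially the same route as the paper's proof: restrict $P$ to the unit sphere, recognize $f=P\lvert_{\s^{m-1}}$ as a $(\lambda,\mu)$-eigenfunction on a compact manifold, apply the remark following Theorem\,\ref{thm: |f|-morse} to get $\beta_1(\s^{m-1}\setminus f^{-1}(\{0\}))\neq0$, and transfer the conclusion to $\mtr^m\setminus P^{-1}(\{0\})$ via the radial deformation retraction afforded by homogeneity. The only difference is cosmetic: the paper simply cites \cite{riedler-23} for the fact that $P\lvert_{\s^{m-1}}$ is an eigenfunction, whereas you rederive it in a self-contained way from the Fuglede--Ishihara characterisation together with Corollary\,\ref{cor: cones}, which is equally valid (and your separate, elementary treatment of $m=2$ is a reasonable precaution, since the cone construction is stated for base dimension at least $2$).
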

\begin{proof}
Recall from \cite{riedler-23} that $P\lvert_{S^{m-1}}$ is a $(\lambda,\mu)$-eigenfunction for $(\lambda,\mu)=(-(d^2+d(m-2)),-d^2)$, here $d$ is the degree of the polynomial $P$. It follows that $S^{m-1}\setminus (P\lvert_{S^{m-1}})^{-1}(\{0\})$ is not simply connected. Since $P$ is homogeneous this set has the same homotopy type as $\mtr^m\setminus P^{-1}(\{0\})$.
\end{proof}

\section{$L^2$-orthogonality relations for eigenfunctions}
\label{sec-4}
Throughout this section let $M$ be a closed manifold.
Further, let $f=f_1+i\,f_2:M\rightarrow\mtc$ be a $(\lambda,\mu)$-eigenfunction, where $f_1,f_2:M\rightarrow\mtr$ are real-valued functions.
We will show some $L^2$-orthogonality of powers of $f_1,f_2$.
In this context we in particular establish a link of eigenfunctions to combinatorial expressions.

\subsection{$L^2$-orthogonality for $f_1$ and $f_2$}
\label{subsec-41}
This subsection serves as motivation for the later subsections.
By straightforward considerations we provide $L^2$-orthogonality for $f_1$ and $f_2$.

\smallskip

From $\mu\in\mtr$, compare Theorem\,\ref{thm: lambda-mu}, and the fact that $f$ is eigen with respect to the conformality operator $\kappa$, we have by separating real and imaginary parts:
\begin{align}
\label{kappa-1}&\langle\mbox{grad}f_1,\mbox{grad}f_2\rangle=\mu f_1f_2,\\
\label{kappa-2}&\lvert \mbox{grad}f_1\rvert^2-\lvert \mbox{grad}f_2\rvert^2=\mu({f_1}^2-{f_2}^2).
\end{align}
The last equation can be rewritten as
\begin{align*}
  \langle\mbox{grad}(f_1-f_2),\mbox{grad}(f_1+f_2)\rangle=\mu(f_1-f_2)(f_1+f_2).
\end{align*}
Since $\lambda\in\mtr$ we also have $$\Delta f_i=\lambda f_i.$$

Let $(M,g)$ be a closed Riemannian manifold.
Then
\begin{align*}
     \lambda\int_M f^2\,dV_g=\int_M f\Delta f\,dV_g=-\int_M\lvert\mbox{grad}f\rvert^2\,dV_g.
\end{align*}
Hence from (\ref{kappa-2}) we get
\begin{align*}
    \lambda\int_M(f_1^2-f_2^2)\,dV_g=-\mu\int_M (f_1^2-f_2^2)\,dV_g.
\end{align*}
Thus, either $\mu=\lambda=0$, which is not possible since $M$ is compact, or $\int_M (f_1^2-f_2^2)\,dV_g=0$, i.e. the $L^2$-norms of $f_1$ and $f_2$ coincide.

\smallskip

Similarly, from (\ref{kappa-1}) we obtain
\begin{align*}
    \lambda\int_Mf_1f_2\,dV_g=\int_Mf_1\Delta f_2\,dV_g=-\int_M\langle\mbox{grad}f_1,\mbox{grad}f_2\rangle\,dV_g=-\mu\int_Mf_1f_2\,dV_g.
\end{align*}
Thus $\int_Mf_1f_2\,dV_g=0.$ By the preceding considerations we obtain the following lemma.

\begin{lemma}
\label{l2-orth}
Let $(M,g)$ be a closed Riemannian manifold and let $f=f_1+i\,f_2:M\rightarrow\mtc$ be a $(\lambda,\mu)$-eigenfunction, where $f_1,f_2:M\rightarrow\mtr$ are real-valued functions.
Then:
        \begin{align*}
            \int_Mf_1f_2\,dV_g=0\quad\mbox{and}\quad\int_Mf_1^2\,dV_g=\int_M f_2^2\,dV_g,\end{align*}
            i.e. $f_1$ and $f_2$ 
        are $L^2$-orthogonal and have the same $L^2$-norm.
   \end{lemma}

\begin{remark}
Another way to state Lemma~\ref{l2-orth} is to say that a $(\lambda,\mu)$-eigenfunction is an isotropic element of $L^2(M)$ equipped with the complex \emph{bilinear} inner product $(f,g)=\int_M f g\,dV_g$, i.e. $\int_M f^2\,dV_g=0$ for any such function.
\end{remark}

\subsection{$L^2$-orthogonality for powers of $f_1$ and $f_2$}
In this subsection we broadly generalize the result of Lemma\,\ref{l2-orth}, i.e. we prove several results concerning rigid $L^2$-orthogonality relations of powers of $f_1, f_2$.

\smallskip

Throughout let $f=f_1+if_2$ be an eigenfunction on a closed manifold $M$, where $f_1,f_2:M\rightarrow\mtr$. This is equivalent to the existence of a sequence of numbers $\lambda_n\in\mtr_{<0}$ so that $\Delta (f_1+if_2)^n = \lambda_n (f_1+if_2)^n$ for all $n$.

\pasttheorem{thm: l2-powers}

\begin{example} Let $n\geq1$, the map $\mtr^{n+1}\to\mtc$, $(x_1,...,x_{n+1})\mapsto x_1+ix_2$ restricts to a $(-n,-1)$-eigenfunction on $S^n$, and one has \cite{folland-01}
$$\int_{S^n}x_1^{2a} x_2^{2b} \,dV_g=\frac{(2a)!(2b)!}{2^{2a+2b}a!b!}\frac{2\pi^{n/2}}{\Gamma(a+b+\frac n2)}=\frac{\binom{a+b}a}{\binom{2a+2b}{2a}}\frac{2\pi^{n/2}\,\Gamma(a+b+1)}{2^{2a+2b}\,\Gamma(a+b+\frac n2)},$$
where $\Gamma$ denotes the gamma function.
\end{example}

In order to prove Theorem\,\ref{thm: l2-powers} we make use of the fact that all powers $f^d$ are eigenfunctions of the Laplacian corresponding to different eigenvalues, and hence orthogonal to each other with respect to the $L^2$ scalar product:

\begin{lemma}\label{lemma: l2-re-im}
For all $a,b\in\mtn$:
$$\int_M \mathrm{Re}(f^a)\mathrm{Im}(f^b)\,dV_g= 0,$$
further if $a\neq b$ then:
$$\int_M \mathrm{Re}(f^a)\mathrm{Re}(f^b)\,dV_g= \int_M \mathrm{Im}(f^a)\mathrm{Im}(f^b)\,dV_g= 0.$$
\end{lemma}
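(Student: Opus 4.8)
The plan is to exploit the fact established earlier in the excerpt that for any $d \in \mtn$ the power $f^d$ is again an eigenfunction of the Laplace--Beltrami operator, with eigenvalue $\lambda_d = d^2\mu + d(\lambda-\mu)$. Since $\lambda,\mu$ are real (by Theorem\,\ref{thm: lambda-mu}) and $\mu<0$, these eigenvalues $\lambda_d$ are \emph{distinct} real numbers for distinct $d$: indeed $\lambda_a = \lambda_b$ with $a\neq b$ would force $(a+b)\mu + (\lambda-\mu) = 0$, but since $a+b\geq 1$ and $\mu<0$ this pins down at most one value, so among positive integers the map $d\mapsto\lambda_d$ is injective. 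Thus $f^a$ and $f^b$ are eigenfunctions of $\Delta$ to different real eigenvalues whenever $a\neq b$, and the standard self-adjointness of $\Delta$ on the compact manifold $M$ gives $\int_M f^a\,\overline{f^b} = 0$. The subtlety is that the $L^2$ pairing appearing in self-adjointness is the Hermitian one with a complex conjugate, whereas the quantities in the statement mix $f^a$ and $f^b$ without conjugation; I must therefore be careful about which orthogonality relation the spectral theorem actually delivers.

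First I would record the two orthogonality relations coming from self-adjointness. Because $f^a$ satisfies $\Delta(f^a) = \lambda_a f^a$ with $\lambda_a$ real, and the Laplacian is formally self-adjoint, for $a\neq b$ one has $\lambda_a\int_M f^a\overline{f^b} = \int_M \Delta(f^a)\,\overline{f^b} = \int_M f^a\,\overline{\Delta(f^b)} = \lambda_b\int_M f^a\overline{f^b}$, forcing $\int_M f^a\overline{f^b}=0$. Separately, applying the same argument to the complex-bilinear (non-conjugated) pairing — which is also compatible with $\Delta$ since the eigenvalues are real, so $\Delta$ is ``symmetric'' for $\int_M \varphi\psi$ as well — yields $\int_M f^a f^b = 0$ for $a\neq b$, i.e. $\int_M f^{a+b}=0$ (take $a\neq b$ with the same sum, or observe directly that $f^{a+b}$ is an eigenfunction to a nonzero eigenvalue and hence integrates to zero, being orthogonal to the constant function which is the eigenfunction for eigenvalue $0$). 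Both of these are clean once the eigenvalues are known to be distinct.

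Next I would extract the stated real/imaginary relations by taking real and imaginary parts of these two complex identities. Writing $f^a = \mathrm{Re}(f^a) + i\,\mathrm{Im}(f^a)$ and similarly for $f^b$, the conjugated relation $\int_M f^a\overline{f^b}=0$ expands (for $a\neq b$) into real and imaginary parts
\begin{align*}
\int_M \mathrm{Re}(f^a)\mathrm{Re}(f^b) + \int_M \mathrm{Im}(f^a)\mathrm{Im}(f^b) &= 0,\\
\int_M \mathrm{Im}(f^a)\mathrm{Re}(f^b) - \int_M \mathrm{Re}(f^a)\mathrm{Im}(f^b) &= 0,
\end{align*}
while the bilinear relation $\int_M f^a f^b = 0$ (valid for \emph{all} $a,b$ with $a+b\neq 0$, since $\lambda_{a+b}\neq 0$) expands into
\begin{align*}
\int_M \mathrm{Re}(f^a)\mathrm{Re}(f^b) - \int_M \mathrm{Im}(f^a)\mathrm{Im}(f^b) &= 0,\\
\int_M \mathrm{Re}(f^a)\mathrm{Im}(f^b) + \int_M \mathrm{Im}(f^a)\mathrm{Re}(f^b) &= 0.
\end{align*}
Combining these systems gives $\int_M \mathrm{Re}(f^a)\mathrm{Re}(f^b) = \int_M \mathrm{Im}(f^a)\mathrm{Im}(f^b) = 0$ when $a\neq b$, and $\int_M \mathrm{Re}(f^a)\mathrm{Im}(f^b)=0$ for all $a,b$ (the $a=b$ case of the last identity being exactly the bilinear relation $\int_M f^{2a}=0$ split into parts, using symmetry of the cross terms).

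The only genuine obstacle is the justification that $d\mapsto\lambda_d$ is injective on positive integers and that the eigenvalue $\lambda_{a+b}$ is never zero, so that $f^{a+b}$ is genuinely orthogonal to constants; both follow immediately from $\mu<0$ as established in Theorem\,\ref{thm: lambda-mu}, so there is no real difficulty here. Everything else is bookkeeping with real and imaginary parts, and I expect the proof to be short.
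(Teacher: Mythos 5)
Your proof is correct. For $a\neq b$ it runs on the same engine as the paper's: all powers $f^d$ are Laplace eigenfunctions with pairwise distinct real eigenvalues $\lambda_d=d^2\mu+d(\lambda-\mu)$, and orthogonality follows from self-adjointness of $\Delta$ on the compact manifold. The paper applies this directly to the four real functions $\mathrm{Re}(f^a),\mathrm{Im}(f^a),\mathrm{Re}(f^b),\mathrm{Im}(f^b)$ (real Laplace eigenfunctions to distinct eigenvalues are $L^2$-orthogonal), whereas you derive the two complex identities $\int_M f^a\overline{f^b}=0$ and $\int_M f^af^b=0$ and then split into real and imaginary parts; this is equivalent bookkeeping. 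The genuine divergence is the diagonal case $a=b$: the paper quotes Lemma~\ref{l2-orth}, whose proof rests on the conformality operator (it uses $\kappa(f,f)=\mu f^2$ and that $\lambda+\mu\neq 0$), while you deduce $\int_M\mathrm{Re}(f^a)\mathrm{Im}(f^a)=0$ from the bilinear identity $\int_M f^{2a}=0$, which needs only that $f^{2a}$ is a Laplace eigenfunction to a nonzero eigenvalue. Your argument is therefore self-contained and uses nothing beyond the fact (recorded in the paper just before Theorem~\ref{thm: l2-powers}) that all powers $f^n$ are Laplace eigenfunctions with the stated eigenvalues, at the cost of being longer than the paper's two-sentence proof, which leans on the already-established Lemma~\ref{l2-orth}. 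One wording slip you should repair: ``pins down at most one value'' does not by itself yield injectivity of $d\mapsto\lambda_d$ --- a priori that one value of $a+b$ could still be realized. The correct observation is that $(a+b)\mu+(\lambda-\mu)$ can never vanish for $a+b\geq 1$, since $(a+b)\mu<0$ and $\lambda-\mu\leq 0$ by Theorem~\ref{thm: lambda-mu}, which you already cite; with that one-line fix the proof is complete.
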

\begin{proof}
In the case $a=b$ the equation $\int_M \mathrm{Re}(f^a)\mathrm{Im}(f^a) = 0$ follows since the real and imaginary parts of a $(\lambda,\mu)$-eigenfunction are $L^2$-orthogonal by Lemma\,\ref{l2-orth}. All other cases follow from the fact that Laplace-eigenfunctions of different eigenvalues are $L^2$-orthogonal to each other.
\end{proof}

When considering the expressions
\begin{align*}
\int_M\mathrm{Re}(f^a)\mathrm{Re}(f^b)\,dV_g&= \sum_{m=0}^{\lfloor(a+b)/2\rfloor}\sum_{k=0}^m \binom{a}{2k}\binom{b}{2(m-k)} (-1)^m\int_M f_1^{(a+b)-2m}f_2^{2m}\,dV_g,\\
\int_M\mathrm{Re}(f^a)\mathrm{Im}(f^b)\,dV_g&= \sum_{m=0}^{\lfloor(a+b-1)/2\rfloor}\sum_{k=0}^m \binom{a}{2k}\binom{b}{2(m-k)+1} (-1)^m\int_M f_1^{(a+b)-2m-1}f_2^{2m+1}\,dV_g.
\end{align*}
the following matrices arise naturally:
\begin{defn}
For $n\in\mtn$ and $0\leq \ell,m\leq \lfloor (n-1)/2\rfloor$ let $$A(n)_{\ell, m}\defeq \sum_{k=0}^m\binom{\ell}{2k}\binom{n-\ell}{2(m-k)+1}.$$
Similarly for $0\leq \ell\leq (n-1)$, $0\leq m\leq n$ define:
$$B(2n)_{\ell, m}\defeq\sum_{k=0}^m\binom{\ell}{2k}\binom{2n-\ell}{2(m-k)}.$$
\end{defn}

In order to show Theorem\,\ref{thm: l2-powers} we suppose the following combinatorial facts, which we will show separately (see Subsection\,\ref{subsec: A-B-ranks}):

\begin{lemma}\label{lemma: A-B-ranks}
Let $n\in\mtn$. Then the $\lfloor(n+1)/2\rfloor\times \lfloor(n+1)/2\rfloor$ matrix $A(n)$ is invertible and the $n\times (n+1)$ matrix $B(2n)$ has a one-dimensional kernel spanned by the vector with components:
$$v_m \defeq (-1)^m\frac{\binom{n}{m}}{\binom{2n}{2m}},\quad m\in\{0,...,n\}.$$
\end{lemma}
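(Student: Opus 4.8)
The plan is to derive both assertions from the determinant evaluations of Theorem~\ref{combi}, supplemented by a single auxiliary identity pinning down the kernel vector. For the invertibility of $A(n)$ I would just match $A(n)$ to the matrices in Theorem~\ref{combi}, splitting on the parity of $n$. If $n=2N$, then the index range $0\le\ell,m\le\lfloor(n-1)/2\rfloor=N-1$ shows that $A(2N)$ is precisely the $N\times N$ matrix in part~(1) of Theorem~\ref{combi} (with their parameter set to $N$), whose determinant $(-1)^{N(N-1)/2}2^{N(N-1)+1}$ is nonzero. If $n=2N+1$, the range is $0\le\ell,m\le N$ and $A(2N+1)$ is the $(N+1)\times(N+1)$ matrix of part~(2), with nonzero determinant $(-1)^{N(N+1)/2}2^{N^2}$. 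Hence $A(n)$ is invertible for every $n$.

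For the dimension of $\ker B(2n)$ I would observe that $B(2n)$ is exactly the submatrix formed by the rows $\ell=0,\dots,n-1$ of the $(n+1)\times(n+1)$ matrix appearing in part~(3) of Theorem~\ref{combi}, the entry formula $\sum_{k=0}^m\binom{\ell}{2k}\binom{2n-\ell}{2(m-k)}$ being identical. Since that determinant equals $(-1)^{n(n+1)/2}2^{n(n-1)}\neq0$, the full matrix is nonsingular, so its first $n$ rows are linearly independent. Thus $B(2n)$ has rank $n$, and its kernel is exactly one-dimensional.

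It then remains to check that the proposed $v$ lies in $\ker B(2n)$; since $v_0=\binom{n}{0}/\binom{2n}{0}=1\neq0$, this $v$ will span the line. Concretely I must verify, for each $0\le\ell\le n-1$,
$$\sum_{m=0}^n B(2n)_{\ell,m}(-1)^m\frac{\binom{n}{m}}{\binom{2n}{2m}} = 0.$$
My approach is first to recognize $B(2n)_{\ell,m}$ as the coefficient of $x^{2m}y^{2(n-m)}$ in the homogeneous polynomial
$$G_\ell(x,y) \defeq \tfrac{1}{4}\big[(y+x)^{2n} + (y+x)^\ell(y-x)^{2n-\ell}+(y-x)^\ell(y+x)^{2n-\ell}+(y-x)^{2n}\big],$$
which follows by writing the inner binomial sum as the product of the even-in-$x$ parts of $(y\pm x)^\ell$ and $(y\pm x)^{2n-\ell}$. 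Next I would use the Beta-integral evaluation $\frac{\binom{n}{m}}{\binom{2n}{2m}}=\left(\int_0^{2\pi}\cos^{2n}\theta\,d\theta\right)^{-1}\int_0^{2\pi}\cos^{2m}\theta\sin^{2(n-m)}\theta\,d\theta$, valid for all $0\le m\le n$. The substitution $x=i\cos\theta,\ y=\sin\theta$ yields $G_\ell(i\cos\theta,\sin\theta)=\sum_m B(2n)_{\ell,m}(-1)^m\cos^{2m}\theta\sin^{2(n-m)}\theta$, so integrating over $[0,2\pi]$ turns the displayed sum into a nonzero multiple of $\int_0^{2\pi}G_\ell(i\cos\theta,\sin\theta)\,d\theta$. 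Finally, using $\sin\theta+i\cos\theta=ie^{-i\theta}$ and $\sin\theta-i\cos\theta=-ie^{i\theta}$, the integrand collapses to a combination of the pure exponentials $e^{\pm2in\theta}$ and $e^{\pm i(2n-2\ell)\theta}$; the constraint $\ell\le n-1$ forces all four frequencies to be nonzero, so every term integrates to zero and $B(2n)v=0$ follows.

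The invertibility and rank statements are bookkeeping against Theorem~\ref{combi}; the substantive point is the last identity $B(2n)v=0$, which is the evaluation of a weighted alternating sum of products of binomials. I expect this to be the main obstacle, and the device that makes it tractable is the parity-tracking substitution $x=i\cos\theta$, which converts the sum into a trigonometric integral of a polynomial that has no constant Fourier mode exactly when $\ell<n$. I would emphasize that this argument is self-contained and never invokes Theorem~\ref{thm: l2-powers}, so there is no circularity: the auxiliary circle serves only to produce the closed form for $\binom{n}{m}/\binom{2n}{2m}$, and a purely combinatorial (e.g.\ hypergeometric coefficient-extraction) verification of the same identity could be substituted if preferred.
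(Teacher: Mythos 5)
Your proof is correct, but it takes a genuinely different route from the paper's. The paper does not deduce Lemma~\ref{lemma: A-B-ranks} from Theorem~\ref{combi}: it proves surjectivity of $A(n)$ and $B(2n)$ directly, by evaluating derivatives of the generating polynomials $\alpha_\ell,\beta_\ell$ at $t=1$ (Lemmas~\ref{lemma: alpha-beta} and~\ref{lemma: A-B-surj}) and running an induction on the auxiliary vectors $X^k$ built from those derivatives; invertibility of $A(n)$ and $\dim\ker B(2n)=1$ then follow by dimension count, and the kernel vector is confirmed by a purely algebraic coefficient extraction, reducing the claim to the fact that a certain generating polynomial equals $2^{2n-1}t^n$ and hence has vanishing $t^\ell$-coefficients for $\ell<n$. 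Your replacements for both steps are sound. The bookkeeping against Theorem~\ref{combi} is exact: $A(2N)$ and $A(2N+1)$ are precisely the matrices of parts~(1) and~(2), and the Section~\ref{sec-4} matrix $B(2n)$ consists of the first $n$ rows of the nonsingular $(n+1)\times(n+1)$ matrix of part~(3), giving rank $n$ and a one-dimensional kernel. There is also no circularity: Section~\ref{sec-det} is self-contained combinatorics that nowhere uses Lemma~\ref{lemma: A-B-ranks} or Theorem~\ref{thm: l2-powers}, and indeed the authors themselves remark after Theorem~\ref{thm: l2-powers} that the rank statements can alternatively be read off from these determinants. Your verification of $B(2n)v=0$ via the Wallis-integral formula for $\binom{n}{m}/\binom{2n}{2m}$ and the substitution $x=i\cos\theta$, $y=\sin\theta$ is an analytic repackaging of the same mechanism the paper uses algebraically: both rest on the closed form of $\sum_m B(2n)_{\ell,m}t^{2m}$ as a sum of four binomial powers, and your Fourier-orthogonality step (all frequencies $\pm 2n$, $\pm(2n-2\ell)$ are nonzero for $\ell\le n-1$) plays exactly the role of the paper's observation that $2^{2n-1}t^n$ has no $t^\ell$-coefficient with $\ell<n$. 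As for trade-offs: your argument is shorter and makes the logical architecture transparent, but it invokes the full determinant evaluations, which are the most laborious results of the paper; the paper's route needs only surjectivity, which comes from a more elementary induction, and keeps the $L^2$-theory of Section~\ref{sec-4} independent of Section~\ref{sec-det}.
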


\begin{proof}[Proof of Theorem\,\ref{thm: l2-powers}]
Let $n\in\mtn$, then by Lemma\,\ref{lemma: l2-re-im} one has for all $\ell\in\{0,..., \lfloor(n-1)/2\rfloor\}$:
$$0=\int_M \mathrm{Re}(f^{\ell})\mathrm{Im}(f^{n-\ell})\,dV_g= \sum_{m=0}^{\lfloor (n-1)/2\rfloor}A(n)_{\ell, m}\, (-1)^m\int_M f_1^{n-(2m+1)}f_2^{2m+1}\,dV_g.$$
Since $A(n)$ is invertible this implies $\int_M f_1^{n-(2m+1)}f_2^{2m+1}\,dV_g=0$ for all $n,m$. Part (1) of Theorem\,\ref{thm: l2-powers} then follows if $b$ is odd. The case of $a$ being odd then follows from applying this result to the eigenfunction $if = -f_2+if_1$.

\smallskip

For part (2) let $n\in\mtn$, then for all $\ell\in\{0,..., n-1\}$:
$$0=\int_M \mathrm{Re}(f^{\ell})\mathrm{Re}(f^{2n-\ell})\,dV_g=\sum_{m=0}^{n}B(2n)_{\ell, m}\, (-1)^m\int_M f_1^{2n-2m}f_2^{2m}\,dV_g.$$
Since $B(2n)$ has a one-dimensional kernel it follows that there must be some constant $C(n,f)$ so that
$$\int_{M}f_1^{2n-2m}f_2^{2m}\,dV_g = C \frac{\binom{n}{m}}{\binom{2n}{2m}}$$
for all $m\in\{0,...,m\}$, from which part (2) of Theorem\,\ref{thm: l2-powers} follows.
\end{proof}

\begin{remark}
The invertibility of $A(n)$ 
and surjectivity of $B(2n)$,
as expressed by Lemma\,\ref{lemma: A-B-ranks}, can also be seen from the fascinating combinatorial identities:
\begin{gather}
\det\left[ \left(\sum_{k=0}^m\binom{\ell}{2k}\binom{2n-\ell}{2(m-k)+1}\right)_{\ell,m \in\{0,...,n-1\}} \right] =(-1)^{n(n-1)/2}2^{n(n-1)+1}, \label{eq: det-A1}\\
\det\left[ \left(\sum_{k=0}^m\binom{\ell}{2k}\binom{2n+1-\ell}{2(m-k)+1}\right)_{\ell,m \in\{0,...,n\}} \right] = (-1)^{n(n+1)/2}2^{n^2}, \label{eq: det-A2}\\
\det\left[ \left(\sum_{k=0}^m \binom{\ell}{2k}\binom{2n-\ell}{2(m-k)}\right)_{\ell,m\in\{0,...,n\}} \right] =(-1)^{n(n+1)/2} 2^{n(n-1)},\label{eq: det-B1}\\
\det\left[ \left(\sum_{k=0}^m \binom{\ell}{2k}\binom{2n+1-\ell}{2(m-k)}\right)_{\ell,m\in\{0,...,n\}} \right] =(-1)^{n(n+1)/2} 2^{n^2}.\label{eq: det-B2}
\end{gather}
To the best of our knowledge these identities are new. We provide a proof of equations (\ref{eq: det-A1}) - (\ref{eq: det-B2}) in Section\,\ref{sec-det}. 
\end{remark}

\subsection{Proof of Lemma\,\ref{lemma: A-B-ranks}}\label{subsec: A-B-ranks}

In this section we show Lemma\,\ref{lemma: A-B-ranks}. We first show that $A$ and $B$ are surjective, begin by defining for $n\in\mtn$ and $\ell$ either in $\{0,...,\lfloor \frac{n-1}2\rfloor\}$ or $\{0,...,n-1\}$:
$$\alpha_\ell(t)=4\sum_{m=0}^{\lfloor(n-1)/2\rfloor} A(n)_{\ell,m}\, t^{2m+1},\qquad \beta_\ell(t) =4\sum_{m=0}^{n} B(2n)_{\ell,m}\, t^{2m}.$$
A simple calculation shows that:
\begin{align}
\alpha_\ell(t)&=(1+t)^n-(1-t)^n - (1+t)^\ell (1-t)^{n-\ell}  +(1+t)^{n-\ell}(1-t)^\ell,\label{eq: alpha(t)}\\
\beta_\ell(t) &=(1+t)^{2n}+(1-t)^{2n}+(1+t)^{\ell}(1-t)^{2n-a}+(1+t)^{2n-\ell}(1-t)^{\ell}.\label{eq: beta(t)}
\end{align}
And then:
\begin{lemma}\label{lemma: alpha-beta}
Let $k,\ell\in \mtn$ and $\ell\in\{0,...,\lfloor \frac{n-1}2\rfloor\}$ resp. $\{0,...,n-1\}$.
\begin{enumerate}
\item If $k< \ell$ then:
$$\frac{d^k}{dt^k}\alpha_\ell(1) = \frac{n!}{(n-k)!}2^{n-k},\qquad \frac{d^{k}}{dt^{k}}\beta_\ell(1) = \frac{2n!}{(2n-k)!}2^{2n-k}.$$
\item If $k=\ell$ then:
$$\frac{d^k}{dt^k}\alpha_\ell(1) = \left(\frac{n!}{(n-k)!}+(-1)^k k!\right) 2^{n-k},\quad \frac{d^{k}}{dt^{k}}\beta_\ell(1) = \left(\frac{(2n)!}{(2n-k)!}+(-1)^kk!\right) 2^{2n-k}.$$
\item If $\ell=0$ then:
$$\frac{d^k}{dt^k}\alpha_0(1) =\frac{n!}{(n-k)!}2^{n-k+1},\quad \frac{d^{k}}{dt^{k}}\beta_0(1) = \frac{2n!}{(2n-k)!}2^{2n-k+1}.$$
\end{enumerate}
\end{lemma}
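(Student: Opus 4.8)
The plan is to argue directly from the closed forms (\ref{eq: alpha(t)}) and (\ref{eq: beta(t)}): once $\alpha_\ell$ and $\beta_\ell$ are written as short sums of products $(1+t)^p(1-t)^q$, computing $\frac{d^k}{dt^k}$ and setting $t=1$ is a purely mechanical application of the Leibniz rule. The single fact that makes everything collapse is that $(1-t)$ vanishes at $t=1$: in the Leibniz expansion of $\frac{d^k}{dt^k}\big[(1+t)^p(1-t)^q\big]$, any summand in which $(1-t)^q$ is differentiated fewer than $q$ times still carries a positive power of $(1-t)$ and hence dies at $t=1$. Thus at most one summand survives---the one differentiating $(1-t)^q$ exactly $q$ times---and it survives only when $k\ge q$.

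First I would record this as an auxiliary evaluation:
$$
\left.\frac{d^k}{dt^k}\big[(1+t)^p(1-t)^q\big]\right|_{t=1}
=\begin{cases}
0,& k<q,\\
(-1)^q\binom{k}{q}\,q!\,\dfrac{p!}{(p-k+q)!}\,2^{\,p-k+q},& k\ge q,
\end{cases}
$$
with the convention that the lower line is $0$ when $p-k+q<0$. This is immediate: the unique surviving Leibniz term is $\binom{k}{k-q}\big[\frac{d^{k-q}}{dt^{k-q}}(1+t)^p\big]\big[\frac{d^{q}}{dt^{q}}(1-t)^q\big]$, and at $t=1$ it equals the displayed value.

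Next I would apply this to the four summands of each of $\alpha_\ell$ and $\beta_\ell$ and read off which terms survive from the size constraints on $k,\ell,n$. For $\alpha_\ell$ the exponent pairs $(p,q)$ are $(n,0),(0,n),(\ell,n-\ell),(n-\ell,\ell)$; since $k\le\ell\le\lfloor(n-1)/2\rfloor$ one checks $k<n$ and $k<n-\ell$, so the second and third summands vanish at $t=1$, the first always contributes $\frac{n!}{(n-k)!}2^{n-k}$, and the fourth contributes iff $k\ge\ell$, i.e. only when $k=\ell$, where the auxiliary identity returns $(-1)^k k!\,2^{n-k}$. Adding these gives exactly cases (1) and (2) for $\alpha_\ell$. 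The argument for $\beta_\ell$ is verbatim the same with pairs $(2n,0),(0,2n),(\ell,2n-\ell),(2n-\ell,\ell)$ and the constraint $k\le\ell\le n-1$, which forces $k<2n$ and $k<2n-\ell$, again killing the middle two summands and replacing $n$ by $2n$ in the exponents of the survivors.

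Finally, for case (3) I would note that at $\ell=0$ the fourth pair coincides with the first and the third with the second, so
$$
\alpha_0(t)=2\big[(1+t)^n-(1-t)^n\big],\qquad \beta_0(t)=2\big[(1+t)^{2n}+(1-t)^{2n}\big],
$$
and since the $(1-t)$-terms still die at $t=1$ the surviving $(1+t)$-contribution is simply doubled, producing the extra factor of $2$. The argument is entirely routine; the main obstacle, such as it is, is purely the inequality bookkeeping that decides, for each of the four terms and each of the three regimes $k<\ell$, $k=\ell$, $\ell=0$, precisely which summands survive evaluation at $t=1$.
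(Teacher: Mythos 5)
Your proposal is correct and takes essentially the same route as the paper: the paper's entire proof is the one-line assertion that the lemma ``follows from equations (\ref{eq: alpha(t)}) and (\ref{eq: beta(t)})'', and your Leibniz-rule computation (only the term differentiating $(1-t)^q$ exactly $q$ times survives at $t=1$, which forces $k\ge q$) supplies precisely the bookkeeping that assertion leaves implicit.
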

\begin{proof}
Follows from equations (\ref{eq: alpha(t)}) and (\ref{eq: beta(t)}).
\end{proof}

\begin{lemma}\label{lemma: A-B-surj}
For all $n\in\mtn$ the matrices $A(n)$ and $B(2n)$ are surjective.
\end{lemma}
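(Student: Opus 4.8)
The plan is to reduce the surjectivity of each matrix to the linear independence of the associated family of polynomials $\alpha_\ell$ (resp.\ $\beta_\ell$), and then to read off that independence from the product expansions (\ref{eq: alpha(t)}) and (\ref{eq: beta(t)}) via orders of vanishing at $t=1$.

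First I would record the reduction. Since $\alpha_\ell(t)=4\sum_m A(n)_{\ell,m}t^{2m+1}$ is, up to the fixed factor $4$, the generating polynomial of the $\ell$-th row of $A(n)$ in the linearly independent system $\{t^{2m+1}\}$, the rows of $A(n)$ are linearly independent if and only if the polynomials $\{\alpha_\ell\}$ are. As $A(n)$ is square of size $\lfloor(n+1)/2\rfloor$, row-independence is equivalent to surjectivity. For $B(2n)$, which is $n\times(n+1)$, surjectivity means full row rank $n$, again equivalent to linear independence of the $n$ rows, i.e.\ of $\{\beta_\ell\}_{\ell=0}^{n-1}$. Thus it suffices to prove that the $\alpha_\ell$ and the $\beta_\ell$ are linearly independent.

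For the independence I would exploit the factored forms. Writing $g_\ell(t)=(1+t)^{n-\ell}(1-t)^\ell-(1+t)^\ell(1-t)^{n-\ell}$, equation (\ref{eq: alpha(t)}) reads $\alpha_\ell=g_0+g_\ell$ (note $g_0=(1+t)^n-(1-t)^n$, so $\alpha_0=2g_0$). Because $\ell\le\lfloor(n-1)/2\rfloor$ forces $\ell<n-\ell$, the summand $(1+t)^{n-\ell}(1-t)^\ell$ vanishes to order exactly $\ell$ at $t=1$ while every other summand of $g_\ell$ vanishes to strictly higher order; hence $g_\ell$ vanishes to order exactly $\ell$ at $t=1$, with nonzero leading coefficient. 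The orders $0,1,\dots,\lfloor(n-1)/2\rfloor$ being pairwise distinct, $\{g_\ell\}$ is linearly independent. A relation $\sum_\ell c_\ell\alpha_\ell=0$ then becomes $\bigl(c_0+\sum_\ell c_\ell\bigr)g_0+\sum_{\ell\ge1}c_\ell g_\ell=0$, forcing $c_\ell=0$ for $\ell\ge1$ and then $2c_0=0$; so $\{\alpha_\ell\}$ is independent and $A(n)$ is surjective. Equivalently, this triangular structure can be read directly off Lemma\,\ref{lemma: alpha-beta}: the functions $\alpha_\ell-\tfrac12\alpha_0$ satisfy $\tfrac{d^k}{dt^k}(\alpha_\ell-\tfrac12\alpha_0)(1)=0$ for $k<\ell$ and equal the nonzero value $(-1)^\ell\ell!\,2^{n-\ell}$ for $k=\ell$, yielding a lower-triangular evaluation matrix with nonzero diagonal. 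The argument for $\beta_\ell$ is identical with $h_\ell(t)=(1+t)^{2n-\ell}(1-t)^\ell+(1+t)^\ell(1-t)^{2n-\ell}$ in place of $g_\ell$: now $\ell\le n-1<2n-\ell$ guarantees that $h_\ell$ vanishes to order exactly $\ell$ at $t=1$, the orders $0,\dots,n-1$ are distinct, and $\beta_\ell=h_0+h_\ell$ gives independence of $\{\beta_\ell\}$ by the same bookkeeping.

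The one step requiring genuine care is the claim that each $g_\ell$ (and $h_\ell$) vanishes to order \emph{exactly} $\ell$ rather than higher: this is precisely where the index restriction $\ell\le\lfloor(n-1)/2\rfloor$ (resp.\ $\ell\le n-1$) enters, ensuring $\ell<n-\ell$ (resp.\ $\ell<2n-\ell$) so that the lowest-vanishing term is not cancelled by another term of the same order. Everything else is routine, and the non-square shape of $B(2n)$ causes no difficulty once surjectivity has been translated into full row rank.
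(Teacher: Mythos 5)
Your proof is correct, but it follows a genuinely different route from the paper's. The paper argues surjectivity directly on the image side: using the derivative evaluations of Lemma~\ref{lemma: alpha-beta} it forms the vectors $X^k=\frac{(n-k)!}{n!\,2^{n-k}}\sum_\ell\frac{d^k}{dt^k}\alpha_\ell(1)\,e_\ell$, which lie in the range of $A(n)$, and then shows by induction on $k$, using the differences $X^k-X^{k+1}$, that every standard basis vector $e_\ell$ lies in the range; the case of $B(2n)$ is dismissed as ``almost identical''. You instead pass to row rank: surjectivity is equivalent to linear independence of the row-generating polynomials $\alpha_\ell$ (resp.\ $\beta_\ell$), which you obtain from the decomposition $\alpha_\ell=g_0+g_\ell$ with $g_\ell=(1+t)^{n-\ell}(1-t)^\ell-(1+t)^\ell(1-t)^{n-\ell}$, together with the observation that $g_\ell$ vanishes to order exactly $\ell$ at $t=1$ --- and you correctly identify that this is precisely where the index restriction $\ell<n-\ell$ (resp.\ $\ell<2n-\ell$) is needed, so the one delicate point is handled. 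The two arguments exploit the same underlying triangular structure at $t=1$: indeed $\alpha_\ell-\tfrac12\alpha_0=g_\ell$, so your vanishing orders are a repackaging of Lemma~\ref{lemma: alpha-beta}, as you yourself note. What your route buys is economy: only orders of vanishing matter, not the exact constants of Lemma~\ref{lemma: alpha-beta}; no induction is required; and $B(2n)$ is treated symmetrically via $h_\ell$ rather than by analogy. What the paper's route buys is explicitness: it constructs concrete preimages of the basis vectors, which is slightly more information than a rank statement.
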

\begin{proof}
We only show this for $A(n)$, the proof for $B(2n)$ is almost identical. Let $e_0,...,e_{\lfloor(n-1)/2\rfloor}$ denote the standard basis of $\mtr^{\lfloor(n-1)/2\rfloor}$, we will show that all $e_\ell$ lie in the image of $A(n)$.

For $0\leq k\leq\lfloor(n-1)/2\rfloor$ define the vectors
$$X^k\defeq\frac{(n-k)!}{n!2^{n-k}}\sum_\ell\frac{d^k}{dt^k}\alpha_\ell(1) e_\ell,$$
which lie in the range of $A(n)$.

Lemma\,\ref{lemma: alpha-beta} shows that the $e_\ell$ components of $X^k$ with $\ell>k$ are just $1$, that the $e_k$ component is $1+\frac{(-1)^k}{\binom nk}$, and that the $e_0$ component is $2$, i.e. $X^k$ is of the form:
$$X^k = 2 e_0 + \sum_{\ell=1}^{k-1} (X^k)_\ell \,e_\ell + \left(1+\frac{(-1)^k}{\binom{n}{k}}\right)e_k+\sum_{\ell>k} e_\ell$$
Then:
$$X^k-X^{k+1} = \sum_{\ell=1}^{k}(X^k-X^{k+1})_\ell\,e_\ell+\frac{(-1)^{k}}{\binom{n}{k+1}} e_{k+1}$$
and we conclude by induction that $e_\ell$ is in the range of $A(n)$ for $\ell\in\{1,...,\lfloor\frac{n-1}2\rfloor\}$. Since one has:
$$e_0= \frac12 X^0-\frac12\sum_{\ell\geq 1}e_\ell$$
the remaining basis element $e_0$ must also be in the image of $A(n)$, finishing the proof that $A(n)$ is surjective.
\end{proof}

From dimensional reasons it follows from Lemma\,\ref{lemma: A-B-surj} that $A(n)$ is then invertible and that $B(2n)$ has a $1$-dimensional kernel. The final step in showing Lemma\,\ref{lemma: A-B-ranks} is then the following statement:

\begin{lemma}
For all $n\in\mtn$ and $\ell\in\{0,...,n-1\}$ one has:
$$\sum_{m=0}^nB(n)_{\ell, m}\,(-1)^m \frac{\binom{n}{m}}{\binom{2n}{2m}}=0.$$
\end{lemma}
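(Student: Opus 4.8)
The plan is to evaluate the sum $S_\ell := \sum_{m=0}^n B(2n)_{\ell,m}\,v_m$, where $v_m = (-1)^m\binom{n}{m}/\binom{2n}{2m}$, by recognizing it as a single linear functional applied to the generating polynomial $\beta_\ell$. First I would recall from (\ref{eq: beta(t)}) that $\tfrac14\beta_\ell(t) = \sum_{m=0}^n B(2n)_{\ell,m}\,t^{2m}$, so that $S_\ell = \Phi\!\left(\tfrac14\beta_\ell\right)$ for the unique linear functional $\Phi$ on even polynomials determined by $\Phi(t^{2m}) = v_m$. The whole difficulty is then concentrated in finding a workable description of $\Phi$.

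The key step is to produce an integral representation of $v_m$ that turns $\Phi$ into an honest integration. I would use the Wallis/Beta identity
$$v_m = (-1)^m\,\frac{2}{\pi}\,\frac{4^n}{\binom{2n}{n}}\int_0^{\pi/2}\sin^{2m}\theta\,\cos^{2n-2m}\theta\,d\theta,$$
which follows from $\int_0^{\pi/2}\sin^{2m}\theta\cos^{2n-2m}\theta\,d\theta = \tfrac12 B\!\left(m+\tfrac12,n-m+\tfrac12\right)$ together with the values $\Gamma\!\left(j+\tfrac12\right) = \tfrac{(2j)!}{4^j j!}\sqrt{\pi}$. Writing $(i\sin\theta)^{2m} = (-1)^m\sin^{2m}\theta$, this shows that for every even polynomial $p$ of degree at most $2n$,
$$\Phi(p) = \frac{2}{\pi}\,\frac{4^n}{\binom{2n}{n}}\int_0^{\pi/2}\cos^{2n}\theta\;p(i\tan\theta)\,d\theta.$$

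Next I would substitute $t = i\tan\theta$ into the four terms of $\beta_\ell$. The identity $1\pm i\tan\theta = e^{\pm i\theta}/\cos\theta$ collapses each term: for instance $(1+t)^\ell(1-t)^{2n-\ell}$ becomes $e^{-2i(n-\ell)\theta}/\cos^{2n}\theta$, and summing the four contributions gives
$$\cos^{2n}\theta\;\beta_\ell(i\tan\theta) = 2\cos(2n\theta) + 2\cos\!\big(2(n-\ell)\theta\big).$$
Hence $S_\ell$ is a fixed positive constant times $\int_0^{\pi/2}\big[\cos(2n\theta)+\cos(2(n-\ell)\theta)\big]\,d\theta$. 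Since $\int_0^{\pi/2}\cos(2k\theta)\,d\theta = \tfrac{\sin(k\pi)}{2k} = 0$ for every nonzero integer $k$, and both frequencies $n$ and $n-\ell$ are nonzero integers exactly when $0\le \ell\le n-1$, the integral vanishes and $S_\ell = 0$, as required.

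The main obstacle is precisely the awkward factor $\binom{n}{m}$ in $v_m$, which does not split across the convolution defining $B(2n)_{\ell,m}$ and so blocks any naive term-by-term manipulation; the substitution $t = i\tan\theta$ is what resolves this, converting the binomial sums into clean exponentials that integrate to zero by orthogonality. The only points needing genuine care are verifying the Wallis normalization constant and checking that the substitution is legitimate (both $\beta_\ell$ and $\Phi$ see only even powers of $t$, so $i\tan\theta$ yields real quantities throughout); everything after that is bookkeeping.
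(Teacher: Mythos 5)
Your proof is correct, but it takes a genuinely different route from the paper's. You handle the awkward factor $\binom{n}{m}/\binom{2n}{2m}$ analytically, via the Wallis--Beta representation $\frac{\binom{n}{m}}{\binom{2n}{2m}} = \frac{2}{\pi}\frac{4^n}{\binom{2n}{n}}\int_0^{\pi/2}\sin^{2m}\theta\cos^{2n-2m}\theta\,d\theta$ (which checks out against $\Gamma(j+\tfrac12)=\tfrac{(2j)!}{4^jj!}\sqrt{\pi}$), so that pairing with the kernel vector becomes integration of $\cos^{2n}\theta\,\beta_\ell(i\tan\theta)$; the substitution $1\pm i\tan\theta = e^{\pm i\theta}/\cos\theta$ then collapses the four terms of (\ref{eq: beta(t)}) to $2\cos(2n\theta)+2\cos(2(n-\ell)\theta)$, and Fourier orthogonality finishes, since $n$ and $n-\ell$ are nonzero for $\ell\in\{0,\dots,n-1\}$ (your computations here are all correct, including the collapse and the normalization). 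The paper instead stays purely combinatorial: it clears the denominator with the transposition identity
\begin{equation*}
\frac{\binom{\ell}{2k}\binom{2n-\ell}{2(m-k)}}{\binom{2n}{2m}} \;=\; \frac{\binom{2m}{2k}\binom{2n-2m}{\ell-2k}}{\binom{2n}{\ell}},
\end{equation*}
reducing the claim to $\sum_{m=0}^n(-1)^m\binom{n}{m}\sum_k\binom{2m}{2k}\binom{2n-2m}{\ell-2k}=0$, and then sums the generating function in $t^\ell$ to get $2^{2n-1}t^n$ by the binomial theorem, whose $t^\ell$ coefficients vanish for $\ell<n$. What each buys: the paper's argument is elementary and self-contained (no Gamma functions or convergence remarks needed), and it exhibits the sharpness of the statement at a glance, since the full generating function $2^{2n-1}t^n$ shows the pairing is nonzero exactly at $\ell=n$; your argument requires the analytic input but explains conceptually \emph{why} the kernel vector has this form --- it is the moment sequence of $\cos^{2n}\theta\,d\theta$ under $t\mapsto i\tan\theta$ --- and the same mechanism (exponential collapse plus orthogonality of $\cos(2k\theta)$ on $[0,\pi/2]$) also recovers the sharpness at $\ell=n$, where the frequency $n-\ell$ degenerates to zero and contributes $\pi/2$.
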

\begin{proof}
This amounts to verifying the following identity for all $\ell\in\{0,...,n-1\}$:
$$\sum_{m=0}^n\sum_{k=0}^m(-1)^m \frac{\binom{\ell}{2k}\binom{2n-\ell}{2(m-k)}\binom{n}{m}}{\binom{2n}{2m}}=0.$$
By expanding the binomial coefficients as quotients over factorials one notes that the above sum vanishes if and only if
$$\sum_{m=0}^n(-1)^m\binom{n}{m}\sum_{k=0}^m \binom{2m}{2k}\binom{2n-2m}{\ell-2k}=0$$
for all $\ell\in\{0,...,n-1\}$. We now consider the expression:
\begin{align*}
\sum_{\ell=0}^{2n}\sum_{k=0}^m \binom{2m}{2k}\binom{2n-2m}{\ell-2k}t^\ell &= \frac12\left[(1+t)^{2m}+(1-t)^{2m}\right] (1+t)^{2n-2m}\\
&= \frac12(1+t)^{2m}(1+t)^{2(n-m)}+\frac12(1-t)^{2m}(1+t)^{2(n-2m)}.
\end{align*}
Which gives:
\begin{align*}
\sum_{\ell=0}^{2n}&\sum_{m=0}^n(-1)^m\binom{n}{m}\sum_{k=0}^m \binom{2m}{2k}\binom{2n-2m}{\ell-2k}t^\ell=\\
&\frac12 \left[(1+t)^{2}-(1+t)^{2}\right]^n+\frac12\left[(1+t)^2-(1-t)^2\right]^{n}=2^{2n-1}t^n.
\end{align*}
In particular the $t^\ell$ coefficient vanishes for all $\ell\in\{0,...,n-1\}$, which implies the desired identity.
\end{proof}

\section{$L^2$-orthogonality relations for eigenfamilies}\label{sec: l2-family}
\label{sec-5}
In this section we apply Theorem\,\ref{thm: l2-powers} to an eigenfamily $\mathcal F$. One obtains orthogonality relations similar to point (1) of Theorem\,\ref{thm: l2-powers}, see Corollary\,\ref{cor: orth} below.

\smallskip

\pasttheorem{thm: l2-family}
\begin{proof}
Since $\mathcal F$ is a $(\lambda,\mu)$-eigenfamily one has that
$$f\defeq\sum_j e^{-i\varphi_j}(g_j+ih_j) = \sum_j (\cos(\varphi_j) g_j+\sin(\varphi_j)h_j )+ i\sum_j (\cos(\varphi_j)h_j-\sin(\varphi_j)g_j)$$
is a $(\lambda,\mu)$-eigenfunction for all $\varphi_j\in\mtr$. Recall from Theorem\,\ref{thm: l2-powers} that $\int_M \mathrm{Re}(f)^a\,dV_g = \int_M \mathrm{Im}(f)^a\,dV_g$ for all $a\in \mtn$. Expanding yields
\begin{align*}
\int_M\mathrm{Re}(f)^a\,dV_g&= \sum_{\sum_j (\ell_j+ k_j)=a} \binom{a}{\ell_1, ... , \ell_m, k_1,...,k_m} \left(\prod_j\cos(\varphi_j)^{\ell_j}\sin(\varphi_j)^{k_j}\right) \int_M \prod_j g_j^{\ell_j}h_j^{k_j}\,dV_g,\\
\int_M\mathrm{Im}(f)^a\,dV_g&= \sum_{\sum_j (\ell_j+ k_j)=a} \binom{a}{\ell_1, ... , \ell_m, k_1,...,k_m} \left(\prod_j\cos(\varphi_j)^{\ell_j}\sin(\varphi_j)^{k_j}\right)(-1)^{\sum_jk_j} \int_M \prod_j g_j^{k_j}h_j^{\ell_j}\,dV_g.
\end{align*}
and comparing the orders of $\cos(\varphi_j)$, $\sin(\varphi_j)$ then implies the result.
\end{proof}
Applying Theorem\,\ref{thm: l2-family} twice immediately implies the following corollary:
\begin{corollary}[Orthogonality]\label{cor: orth}
Let $(M,g)$ be a closed Riemannian manifold and $\mathcal F=\left\{g_j+ih_j\mid j\in\{1,...,n\}\right\}$ a $(\lambda,\mu)$-eigenfamily on $M$. If $a_1,...,a_n,b_1,...,b_n\in\mtn$ are so that $\sum_j (a_j+b_j)$ is odd then:
$$\int_M g_1^{a_1}h_1^{b_1}\cdots g_n^{a_n}h_n^{b_n}\,dV_g=0.$$
\end{corollary}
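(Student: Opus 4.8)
The plan is to derive this vanishing purely formally from the symmetry relation of Theorem\,\ref{thm: l2-family}, invoking it twice. The idea is that a single application of that theorem swaps the exponents on each pair $g_j, h_j$ while introducing a sign; applying it a second time returns the integrand to its original form, so the two accumulated signs must multiply to $+1$ — unless the sign is $-1$, in which case the integral is forced to vanish.

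Concretely, I would abbreviate $I \defeq \int_M g_1^{a_1}h_1^{b_1}\cdots g_n^{a_n}h_n^{b_n}$ and let $J \defeq \int_M g_1^{b_1}h_1^{a_1}\cdots g_n^{b_n}h_n^{a_n}$ denote the integral obtained from $I$ by interchanging the exponent pairs $(a_j,b_j)\mapsto (b_j,a_j)$ for every $j$. Theorem\,\ref{thm: l2-family}, applied with the exponents $a_1,\dots,a_n,b_1,\dots,b_n$, then reads $I = (-1)^{\sum_j b_j}\, J$.

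Next I would apply the very same theorem to $J$. Here the exponents on the $g_j$ are the $b_j$ and those on the $h_j$ are the $a_j$, so the role of the ``$b$''-exponents in the statement of Theorem\,\ref{thm: l2-family} is now played by the $a_j$. The theorem therefore yields $J = (-1)^{\sum_j a_j}\, I$. Substituting this back gives
\begin{equation*}
I = (-1)^{\sum_j b_j}(-1)^{\sum_j a_j}\, I = (-1)^{\sum_j (a_j+b_j)}\, I.
\end{equation*}
If $\sum_j(a_j+b_j)$ is odd, this reads $I = -I$ and hence $I=0$, which is exactly the claim.

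There is essentially no obstacle here: the statement is a one-line consequence of the previously established symmetry, which is why it is phrased as a corollary. The only point requiring care — and the one I would make explicit in the second application — is the bookkeeping of the sign: one must recognise that after the interchange the exponent governing the sign is $\sum_j a_j$ rather than $\sum_j b_j$, so that the combined factor is controlled by the total degree $\sum_j(a_j+b_j)$ and not by either partial sum alone.
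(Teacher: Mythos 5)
Your proof is correct and is precisely the argument the paper intends: the paper's entire justification is the phrase ``Applying Theorem\,\ref{thm: l2-family} twice immediately implies,'' and your two applications with the sign bookkeeping $I = (-1)^{\sum_j b_j}(-1)^{\sum_j a_j} I$ spell out exactly that one-line argument. No gaps.
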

\begin{remark}
In point (1) of Theorem\,\ref{thm: l2-powers} one requires only that one of $a,b$ be odd, in particular the sum $a+b$ can be even. For Corollary\,\ref{cor: orth} it is however necessary that the sum $\sum_j(a_j+b_j)$ is odd, since if $f=f_1+if_2$ is a $(\lambda,\mu)$-eigenfunction one has that $\{f_1+if_2, -f_2+if_1\}$ is a $(\lambda,\mu)$-eigenfamily and
$$\int_M f_1^{a_1+b_2}f_2^{b_1+a_2}\,dV_g=(-1)^{a_2}\int_M f_1^{a_1}f_2^{b_1}(-f_2)^{a_2}f_1^{b_2}\,dV_g,$$
which can be non-zero even if both of $a_1+a_2$, $b_1+b_2$ are odd.
\end{remark}

\section{Combinatorial Identities}
\label{sec-det}
In this section we show the combinatorial identities (\ref{eq: det-A1})-(\ref{eq: det-B2}), i.e. the goal of this section is to prove the following theorem:
\pasttheorem{combi}

\begin{example}
We provide one example for each part of Theorem\,\ref{combi}.
\begin{enumerate}
\item Example for part (1) of Theorem\,\ref{combi}: For $n=4$ we have 
\begin{align*}
\det \begin{pmatrix}
8 & 56 & 56&8  \\
7 & 35 & 21&1 \\
6 & 26 & 26&6 \\
5 & 25 & 31&3
\end{pmatrix}=2^{13}.
\end{align*}
\item Example for part (2) of Theorem\,\ref{combi}: For $n=3$ we have 
\begin{align*}
\det\begin{pmatrix}
7 & 35 & 21&1  \\
6 & 20 & 6&0 \\
5 & 15 & 11&1 \\
4 & 16 & 12&0
\end{pmatrix}=2^{9}.
\end{align*}
\item Example for part (3) of Theorem\,\ref{combi}: For $n=5$ we have 
\begin{align*}
\det \begin{pmatrix}
1 & 1 & 1&1&1&1  \\
45 & 36 & 29&24&21&20  \\
210 & 126 & 98&98&106&110  \\
210 & 84 & 98&112&106&100  \\
45 & 9 & 29&21&21&25  \\
1 & 0 & 1&0&1&0  
\end{pmatrix}=-2^{20}.
\end{align*}
\item Example for part (4) of Theorem\,\ref{combi}: For $n=4$ we have 
$$\det 
\begin{pmatrix}
 1 & 36 & 126 & 84 & 9 \\
 1 & 28 & 70 & 28 & 1 \\
 1 & 22 & 56 & 42 & 7 \\
 1 & 18 & 60 & 46 & 3 \\
 1 & 16 & 66 & 40 & 5 
\end{pmatrix} = 2^{16}.$$
\end{enumerate}
\end{example}

\begin{defn}
Let $n\in \mtn$, $\ell\in\{0,...,n\}$. Define $a_\ell(n), b_\ell(n)$ to be the row vectors with components:
\begin{align*}
a_\ell(n)_m &=  \sum_{k=0}^m \binom{\ell}{2k}\binom{n-\ell}{2(m-k)+1},  & &m\in\{0,...,\lfloor \frac{n-1}2\rfloor\},\\
b_\ell(n)_m &=  \sum_{k=0}^m \binom{\ell}{2k}\binom{n-\ell}{2(m-k)},  & &m\in\{0,...,\lfloor \frac{n}2\rfloor\}. 
\end{align*}
For this section we also define:
$$A(n) = \begin{pmatrix} a_0(n)\\ \vdots \\ a_{ \lfloor \frac{n-1}2\rfloor}\end{pmatrix}, \qquad B(n) = \begin{pmatrix}b_0(n)\\ \vdots \\ b_{\lfloor \frac n2\rfloor}(n)\end{pmatrix}.$$
\end{defn}
\begin{remark}\begin{enumerate}
\item The above definition of $A(n)$ agrees with the definition in Section\,\ref{sec: l2-family}, for $B(n)$ we have removed the last row in order for the matrix to be invertible.
\item Theorem\,\ref{combi} can be restated as:
\begin{align*}
\det A(2n) &= (-1)^{n(n-1)/2}2^{n(n-1)+1},& &A(2n+1) = (-1)^{n(n+1)/2}2^{n^2},\\
\det B(2n) &= (-1)^{n(n+1)/2}2^{n(n-1)}, & & B(2n+1) = (-1)^{n(n+1)/2}2^{n^2}.
\end{align*}
\item $a_{\ell}(n)$ is a row vector of length $\lfloor(n+1)/2\rfloor$. At times it is however convenient to increase its length by adding a tail of zeros, e.g. if $n$ is even then for the equation $a_1(n+1)=a_0(n)$ (proven below) to hold one must increase the length of $a_0(n)$ in this way.
\end{enumerate}\end{remark}

The key steps to showing the above identities are the following relations:

\begin{proposition}\label{prop: r-relations}
Let $n\in\mtn$, $\ell\in\{0,...,n\}$. Then:
\begin{enumerate}
\item $\displaystyle \qquad a_0(n)=a_\ell(n)+a_{n-\ell}(n).$
\item $\displaystyle\qquad a_{\ell+1}(n+1)=2(-1)^\ell \sum_{j=0}^\ell (-1)^j a_j(n) +\begin{cases}-a_0(n) & \ell\text{ even}\\ a_0(n+1) & \ell\text{ odd}\end{cases}.$
\end{enumerate}
\end{proposition}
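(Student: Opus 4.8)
The plan is to pass to generating functions, exploiting the closed form already recorded in equation (\ref{eq: alpha(t)}). For fixed $n$ I introduce $\alpha_\ell^{(n)}(t)\defeq 4\sum_m a_\ell(n)_m\, t^{2m+1}$. Since the monomials $t^{2m+1}$ are linearly independent, the assignment $a_\ell(n)\mapsto\alpha_\ell^{(n)}$ is a linear injection, so any claimed linear relation among the vectors $a_\ell(n)$ (with zeros appended to equalize lengths, as in the preceding remark) holds if and only if the corresponding relation among the polynomials $\alpha_\ell^{(n)}$ holds. The convolution defining $a_\ell(n)_m$ factors this generating function as a product of the even part of $(1+t)^\ell$ and the odd part of $(1+t)^{n-\ell}$, yielding
$$\alpha_\ell^{(n)}(t)=(1+t)^n-(1-t)^n-(1+t)^\ell(1-t)^{n-\ell}+(1+t)^{n-\ell}(1-t)^\ell,$$
which is exactly (\ref{eq: alpha(t)}). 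Writing $p=1+t$ and $q=1-t$, I would carry out all manipulations in $p,q$, using repeatedly that $p+q=2$.

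For part (1), substituting the closed form gives $\alpha_\ell^{(n)}+\alpha_{n-\ell}^{(n)}=2(p^n-q^n)$, because the mixed terms $p^\ell q^{n-\ell}$ and $p^{n-\ell}q^\ell$ cancel in pairs; and $2(p^n-q^n)=\alpha_0^{(n)}$. This is immediate.

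For part (2) I would compute the right-hand generating function $2(-1)^\ell\sum_{j=0}^\ell(-1)^j\alpha_j^{(n)}$ directly. The term $p^n-q^n$, constant in $j$, contributes a factor $\sum_{j=0}^\ell(-1)^j$, which equals $1$ for $\ell$ even and $0$ for $\ell$ odd; this is the source of the case distinction. The two $j$-dependent terms sum as geometric series in $-p/q$ and $-q/p$, and using $p+q=2$ these evaluate in closed form, crucially producing the degree-raised monomials $p^{n+1},q^{n+1}$ together with the mixed terms $p^{\ell+1}q^{n-\ell}$ and $p^{n-\ell}q^{\ell+1}$. Multiplying by $2(-1)^\ell$ and adding the stated correction ($-\alpha_0^{(n)}$ when $\ell$ is even, $+\alpha_0^{(n+1)}$ when $\ell$ is odd) I expect to obtain exactly $p^{n+1}-q^{n+1}-p^{\ell+1}q^{n-\ell}+p^{n-\ell}q^{\ell+1}=\alpha_{\ell+1}^{(n+1)}$, finishing the claim.

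The main obstacle is bookkeeping rather than conceptual, and two points need care. First, $a_{\ell+1}(n+1)$ is longer than each $a_j(n)$, so the generating-function identity must be consistent in degree; this is automatic, since the geometric summation of $\sum_j(-1)^j\alpha_j^{(n)}$ manufactures the top-degree monomials $p^{n+1},q^{n+1}$ even though every summand has degree at most $n$, which matches precisely the zero-padding convention of the remark. Second, the parity of $\ell$ must be threaded correctly through both $\sum_{j=0}^\ell(-1)^j$ and the factor $(-1)^{\ell+1}$ coming from the last term of each geometric series; the two correction terms in the statement are exactly what is needed to absorb the resulting discrepancy in each parity class.
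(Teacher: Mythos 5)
Your proposal is correct and follows essentially the same route as the paper: the paper also encodes $a_\ell(n)$ in the generating polynomial $P_{\ell,n}(x)=\bigl((1+x)^\ell+(1-x)^\ell\bigr)\bigl((1+x)^{n-\ell}-(1-x)^{n-\ell}\bigr)$, obtains part (1) from the cancellation of the mixed terms, and obtains part (2) by summing the same geometric series in $-\tfrac{1+x}{1-x}$ and $-\tfrac{1-x}{1+x}$ with the identical parity case split producing the $-a_0(n)$ versus $+a_0(n+1)$ corrections. The computations you defer ("I expect to obtain\dots") do close exactly as you predict, so there is no gap.
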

\begin{proof}
Let $P_{\ell,n}(x)$ denote the polynomial
$$4\sum_m \sum_{k=0}^m \binom{\ell}{2k}\binom{n-\ell}{2(m-k)+1} x^{2m+1},$$
i.e. the $x^{2m+1}$ component of $P_{\ell,n}$ is $4$ times $a_{\ell}(n)_m$. One checks:
\begin{align*}
P_{\ell,n}(x) &= \left((1+x)^\ell +(1-x)^\ell\right)\cdot \left((1+x)^{n-\ell}- (1-x)^{n-\ell}\right)\\
&= (1+x)^n-(1-x)^n +(1+x)^{n-\ell}(1-x)^\ell - (1+x)^\ell(1-x)^{n-\ell}.
\end{align*}
Further, one immediately sees:
\begin{align*}
P_{0,n} = 2(1+x)^n-2(1-x)^n = P_{\ell,n}+P_{n-\ell,n},
\end{align*}
which implies part (1).

For part (2) we first calculate:
$$\sum_{j=0}^\ell (-1)^j(1+x)^j(1-x)^{n-j}=(1-x)^n \frac{1-(\frac{1+x}{x-1})^{\ell+1}}{1-(\frac{1+x}{x-1})}=\frac{(1-x)^{n+1}}{2}-(-1)^\ell\frac{(1-x)^{n-\ell}(1+x)^{\ell+1}}2.$$
Similarly:
$$\sum_{j=0}^\ell (-1)^j(1-x)^j(1+x)^{n-j}=(1+x)^n \frac{1-(\frac{x-1}{1+x})^{\ell+1}}{1-(\frac{x-1}{1+x})}=\frac{(1+x)^{n+1}}{2}+(-1)^\ell\frac{(1+x)^{n-\ell}(1-x)^{\ell+1}}2.$$
Additionally
$$\sum_{j=0}^\ell (-1)^j (1+x)^n = \frac{1+(-1)^\ell}2(1+x)^n, \quad \sum_{j=0}^\ell (-1)^j (1-x)^n = \frac{1+(-1)^\ell}2(1-x)^n.$$
Together this implies:
\begin{align*}
2\sum_{j=0}^\ell (-1)^jP_{j,n}(x) =(-1)^{\ell} &\left((1+x)^{n-\ell}(1-x)^{\ell+1}-(1+x)^{\ell+1}(1-x)^{n-\ell}\right)\\
&+ (1+x)^{n+1}-(1-x)^{n+1}+ (1+(-1)^\ell)\left((1+x)^n-(1-x)^n\right).
\end{align*}
If $\ell$ is even this equation becomes:
$$2\sum_{j=0}^\ell(-1)^j P_{j,n}(x) = P_{\ell+1,n+1}(x)+P_{0,n}(x).$$
If $\ell$ is odd it becomes instead:
$$2\sum_{j=0}^\ell (-1)^j P_{j,n}(x)=-P_{\ell+1,n+1}(x)+P_{0,n+1}(x).$$
These two identities then imply the statement of part (2).
\end{proof}

\begin{corollary}\label{cor: r-relations}
Let $n\in\mtn$, then:
\begin{enumerate}
\item $\displaystyle \qquad a_1(n+1)=a_0(n).$
\item $\displaystyle \qquad a_0(2n) =(-1)^{n+1}4 a_{n-1}(2n-1) + 4\sum_{j=0}^{n-2}(-1)^ja_j(2n-1) + \begin{cases}-2a_0(2n-1) & n\text{ odd}\\ 0 & n\text{ even}\end{cases}.$
\end{enumerate}
\end{corollary}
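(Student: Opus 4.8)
The plan is to derive both parts of Corollary~\ref{cor: r-relations} directly from Proposition~\ref{prop: r-relations}, with no return to the generating polynomials $P_{\ell,n}$ needed. Part~(1) is a clean specialization, whereas part~(2) combines both relations of the Proposition with a short parity analysis.

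For part~(1) I would apply Proposition~\ref{prop: r-relations}(2) with $\ell=0$. As $0$ is even, the relevant branch contributes $-a_0(n)$, and the single-term sum reduces the identity to
\[
a_1(n+1)=2(-1)^0a_0(n)-a_0(n)=2a_0(n)-a_0(n)=a_0(n),
\]
which is exactly the claim.

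For part~(2) the key first step is to apply Proposition~\ref{prop: r-relations}(1) with the middle index $\ell=n$ and top index $2n$; since the two resulting terms coincide, this collapses to $a_0(2n)=2a_n(2n)$, so it suffices to expand $a_n(2n)$. I would obtain this expansion from Proposition~\ref{prop: r-relations}(2), taking its $\ell$ to be $n-1$ and its ambient index to be $2n-1$, which gives
\[
a_n(2n)=2(-1)^{n-1}\sum_{j=0}^{n-1}(-1)^ja_j(2n-1)+\begin{cases}-a_0(2n-1)& n\text{ odd}\\ a_0(2n)& n\text{ even},\end{cases}
\]
the branch being governed by the parity of $n-1$. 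Multiplying through by $2$, splitting off the $j=n-1$ summand, and using $(-1)^{n-1}=(-1)^{n+1}$ to rewrite its coefficient then yields the stated right-hand side.

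The only genuinely delicate point, and what I expect to be the main obstacle, is the case $n$ even: there the term $a_0(2n)$ reappears on the right, so $a_0(2n)=2a_n(2n)$ becomes the self-referential relation $a_0(2n)=-4\sum_{j=0}^{n-1}(-1)^ja_j(2n-1)+2a_0(2n)$, which must be solved for $a_0(2n)$ to give $a_0(2n)=4\sum_{j=0}^{n-1}(-1)^ja_j(2n-1)$; this is precisely the even branch, with empty extra term. Everything else is routine sign bookkeeping: checking that $(-1)^{n-1}$ and $(-1)^{n+1}$ agree, and tracking the $-2a_0(2n-1)$ contribution that survives only in the odd case.
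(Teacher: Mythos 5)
Your proposal is correct and takes essentially the same route as the paper: part (1) is the $\ell=0$ specialization of Proposition~\ref{prop: r-relations}\,(2), and part (2) combines $a_0(2n)=2a_n(2n)$ from Proposition~\ref{prop: r-relations}\,(1) with the $\ell=n-1$ instance of Proposition~\ref{prop: r-relations}\,(2), splitting cases on the parity of $n$. The self-referential equation for $n$ even, which you solve explicitly for $a_0(2n)$, is exactly what the paper compresses into the phrase ``follows from elementary arithmetic.''
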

\begin{proof}
Part (1) follows from Proposition\,\ref{prop: r-relations}, (2):
$$a_1(n+1) = 2 a_0(n) - a_0(n) = a_0(n).$$
Part (2) follows from noting from Proposition\,\ref{prop: r-relations}, (1) that $a_0(2n)=a_n(2n)+a_n(2n)=2a_n(2n)$. Then apply Proposition\,\ref{prop: r-relations}, (2) again:
$$a_0(2n)=2a_n(2n) = (-1)^{n-1}4\sum_{j=0}^{n-1} (-1)^ja_j(2n-1) +  \begin{cases}-2a_0(2n-1) & n\text{ odd}\\ 2a_0(2n) & n\text{ even}\end{cases}.$$
If $n$ is odd we recover the statement, for $n$ even the statement follows from elementary arithmetic.
\end{proof}

The following intermediate step will be used in order to inductively calculate $\det A(n)$:
\begin{lemma}\label{lemma: A(n)-step1}
Let $n\in\mtn$, then:
$$\det A(n+1) = \det\begin{pmatrix}a_0(n+1)\\ a_0(n)\\ 2a_1(n)\\ 2a_2(n)\\ \vdots \\ 2a_{\lfloor \frac{n}2\rfloor-1}(n)\end{pmatrix},$$
where we treat $a_\ell(n)$ as a row vector of length $\lfloor \frac{n}2\rfloor+1$ instead of $\lfloor \frac{n+1}2\rfloor$. If $n$ is even this means we append a $0$ to its tail.
\end{lemma}
\begin{proof}
Recall:
$$A(n+1)=\begin{pmatrix} a_0(n+1)\\  a_1(n+1)\\ a_2(n+1) \\ \vdots \\ a_{\lfloor \frac{n}2\rfloor}(n+1)\end{pmatrix}$$
First note that $a_1(n+1)=a_0(n)$, and so we can replace the second row of $A(n+1)$ with $a_0(n)$. For the remaining rows we proceed inductively by applying Proposition\,\ref{prop: r-relations}, (2), which for $\ell\geq 1$ we rewrite as:
$$a_{\ell+1}(n+1)=2a_\ell(n)+2\sum_{j=0}^{\ell-1}(-1)^{\ell+j}a_j(n)+\begin{cases}-a_0(n) & \ell\text{ even}\\ a_0(n+1) & \ell\text{ odd}\end{cases},$$
and so we may replace the row $a_{\ell+1}(n+1)$ by $2a_{\ell}(n)$ without changing the determinant.
\end{proof}

\begin{lemma}\label{lemma: A-induc}
Let $n\in\mtn$, then:
$$\det A(2n)=2^n\det A(2n-1),\qquad \det A(2n+1) = (-1)^{n}2^{n-1}\det A(2n).$$
\end{lemma}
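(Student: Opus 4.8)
The plan is to prove both recursions simultaneously by feeding the appropriate parity into the row‑reduced form of $A(n+1)$ supplied by Lemma~\ref{lemma: A(n)-step1}. In that form the bottom rows are always (scaled) rows of $A(n)$, so in each case the entire difficulty is concentrated in the single top row $a_0(n+1)$, which I handle using Corollary~\ref{cor: r-relations} and the zero‑padding convention.

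First I would establish $\det A(2n)=2^n\det A(2n-1)$ by applying Lemma~\ref{lemma: A(n)-step1} with its running index equal to $2n-1$. This writes $\det A(2n)$ as the determinant of a matrix whose top row is $a_0(2n)$ and whose remaining $n-1$ rows are $a_0(2n-1),2a_1(2n-1),\dots,2a_{n-2}(2n-1)$; the one row of $A(2n-1)$ that is \emph{missing} from this list is the last one, $a_{n-1}(2n-1)$. The key input is Corollary~\ref{cor: r-relations}(2), which expresses $a_0(2n)$ as $(-1)^{n+1}4\,a_{n-1}(2n-1)$ plus a linear combination of $a_0(2n-1),\dots,a_{n-2}(2n-1)$. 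Every term other than the $a_{n-1}(2n-1)$ one lies in the span of the lower rows, so determinant‑preserving row operations replace the top row by $(-1)^{n+1}4\,a_{n-1}(2n-1)$. Factoring $(-1)^{n+1}2^2$ from the top row and one $2$ from each of the $n-2$ rows $2a_1(2n-1),\dots,2a_{n-2}(2n-1)$ produces the scalar $(-1)^{n+1}2^n$ times the determinant of $A(2n-1)$ with its last row cyclically moved to the top. That cyclic shift of $n$ rows carries a sign $(-1)^{n-1}$, and since $(-1)^{n+1}(-1)^{n-1}=1$ the prefactor collapses to exactly $2^n$.

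Next I would prove $\det A(2n+1)=(-1)^n2^{n-1}\det A(2n)$ by applying Lemma~\ref{lemma: A(n)-step1} with running index $2n$. Here the lemma's padding convention (the "$n$ even'' clause) appends a trailing zero to each lower row $a_0(2n),2a_1(2n),\dots,2a_{n-1}(2n)$ so that they reach length $n+1$; consequently all $n$ lower rows vanish in the last column, whereas the top row $a_0(2n+1)$ is genuinely of length $n+1$. Expanding along the last column therefore leaves only the top‑row contribution, whose entry is $a_0(2n+1)_n=\binom{2n+1}{2n+1}=1$ (using $a_0(N)_m=\binom{N}{2m+1}$). The cofactor sign is $(-1)^{1+(n+1)}=(-1)^n$, and the surviving minor is $A(2n)$ with rows $1,\dots,n-1$ scaled by $2$, hence has determinant $2^{n-1}\det A(2n)$; multiplying these three factors yields the claim.

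The step I expect to be the main obstacle is the bookkeeping rather than any new idea: one must track the zero‑padding of Lemma~\ref{lemma: A(n)-step1} consistently across the two parities, and, in the even case, verify that the sign $(-1)^{n-1}$ from the cyclic permutation cancels the sign $(-1)^{n+1}$ coming out of Corollary~\ref{cor: r-relations}(2). Both checks are elementary once the length conventions are pinned down, so the argument needs nothing beyond the relations already proved in Proposition~\ref{prop: r-relations} and Corollary~\ref{cor: r-relations}.
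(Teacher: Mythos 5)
Your proposal is correct and follows essentially the same route as the paper's own proof: both rest on Lemma~\ref{lemma: A(n)-step1}, using Corollary~\ref{cor: r-relations}(2) to replace the top row by $(-1)^{n+1}4\,a_{n-1}(2n-1)$ in the even case, and a last-column expansion (via the zero-padding convention and $a_0(2n+1)_n=1$) in the odd case. The only difference is that you make explicit the cyclic-permutation sign $(-1)^{n-1}$ and its cancellation against $(-1)^{n+1}$, which the paper leaves implicit in the step $\det A(2n)=2^n\det A(2n-1)$.
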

\begin{proof}
We begin with $\det A(2n)$. Recall from Corollary\,\ref{cor: r-relations} that
$$a_0(2n) =(-1)^{n+1}4 a_{n-1}(2n-1) + ...$$
where we have suppressed terms involving $a_1(2n-1), ..., a_{n-2}(2n-1)$. It follows that
$$\det A(2n) = \det\begin{pmatrix}(-1)^{n+1} 4a_{n-1}(2n-1)\\ a_0(2n-1)\\ 2a_1(2n-1)\\ 2a_2(2n-1)\\ \vdots \\ 2 a_{n-2}(2n-1)\end{pmatrix} = 2^n \det A(2n-1).$$
In order to calculate $\det A(2n+1)$ recall that in this case that we take $a_\ell(2n)_n=0$ in Lemma\,\ref{lemma: A(n)-step1}. On the other hand $a_0(2n+1)_n=1$, as can be checked directly from the definition. Then:
$$\det A(2n+1) = \det\begin{pmatrix} a_0(2n+1)_{0,...,n-1} & 1 \\ a_0(2n) & 0\\ 2a_1(2n)&0 \\ 2a_2(2n)&0\\ \vdots\\ 2a_{n-1}(2n)&0\end{pmatrix}=(-1)^n 2^{n-1}\det A(2n),$$
which completes the proof of the lemma.
\end{proof}

We then conclude parts (1) and (2) of Theorem\,\ref{combi}:

\begin{corollary}
Let $n\in\mtn$, then
\begin{gather*}
\det A(2n+1) = (-1)^{n (n+1)/2}\, 2^{n^2},\\
\det A(2n) = (-1)^{n(n-1)/2}\, 2^{n(n-1)+1}.
\end{gather*}
\end{corollary}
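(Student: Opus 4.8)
The plan is to read this Corollary purely as solving the two-term recurrence recorded in Lemma~\ref{lemma: A-induc}, namely
$$\det A(2n)=2^n\det A(2n-1),\qquad \det A(2n+1)=(-1)^n 2^{n-1}\det A(2n),$$
by induction on $n$. The bulk of the combinatorial work has already gone into establishing these recursions via Proposition~\ref{prop: r-relations}, so what remains is essentially bookkeeping of powers of $2$ together with the parity of the sign exponents.

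First I would pin down the base case. The matrix $A(1)$ is $1\times 1$ with single entry $a_0(1)_0=\binom{0}{0}\binom{1}{1}=1$, so $\det A(1)=1$; this agrees with the claimed value $\det A(2\cdot 0+1)=(-1)^0 2^0=1$ at $n=0$. One may equally anchor the induction at $\det A(2)=2$, which also follows directly or from the first recursion applied once.

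For the inductive step I would propagate the formula through the two recursions in turn. Assuming $\det A(2n-1)=(-1)^{(n-1)n/2}2^{(n-1)^2}$, the first recursion gives
$$\det A(2n)=2^n\,(-1)^{(n-1)n/2}2^{(n-1)^2}=(-1)^{n(n-1)/2}2^{\,n(n-1)+1},$$
using $(n-1)^2+n=n(n-1)+1$ and the fact that multiplication by $2^n$ leaves the sign unchanged. Feeding this into the second recursion yields
$$\det A(2n+1)=(-1)^n 2^{n-1}(-1)^{n(n-1)/2}2^{\,n(n-1)+1}=(-1)^{n(n+1)/2}2^{\,n^2},$$
where the sign collapses because $n+\tfrac{n(n-1)}2=\tfrac{n(n+1)}2$ and the exponent of $2$ simplifies as $n(n-1)+1+(n-1)=n^2$. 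This closes the induction and establishes both formulas simultaneously.

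The only place demanding care---hardly an obstacle once the recursions of Lemma~\ref{lemma: A-induc} are in hand---is the arithmetic of the exponents: one must verify the two quadratic identities $(n-1)^2+n=n(n-1)+1$ and $n+\tfrac{n(n-1)}2=\tfrac{n(n+1)}2$, and confirm that the parity indexing of the claimed sign exponents remains consistent across the even-to-odd and odd-to-even transitions. There is no genuine difficulty here; the entire weight of the result rests on the earlier determinant recursions rather than on this final step.
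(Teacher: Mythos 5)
Your proposal is correct and follows exactly the paper's own argument: the paper likewise deduces the corollary by inductively applying Lemma~\ref{lemma: A-induc} starting from $\det A(1)=1$ and $\det A(2)=2$. Your exponent and sign computations (in particular $(n-1)^2+n=n(n-1)+1$ and $n+\tfrac{n(n-1)}{2}=\tfrac{n(n+1)}{2}$) are accurate, so the induction closes as claimed.
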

\begin{proof}
This follows from inductively applying Lemma\,\ref{lemma: A-induc}, and $\det A(1)=1$, $\det A(2)=2$.
\end{proof}

For parts (3) and (4) of Theorem\,\ref{combi} we first note that:
$$\binom{\ell}{2k}\binom{n-\ell}{2(m-k)}=\binom{\ell}{2k} (\binom{n+1-\ell}{2(m-k)+1}-\binom{n-\ell}{2(m-k)+1}),$$
and so:
$$b_\ell(n) = a_\ell(n+1)-a_\ell(n).$$

\begin{lemma}\label{lemma: B(n)-step1}
Let $n\in\mtn$, then:
$$\det B(n) = \det \begin{pmatrix}a_0(n+1)- a_0(n) \\ a_0(n)- a_1(n) \\ a_1(n)- a_2(n)\\ \vdots \\ a_{\lfloor \frac {n-2}2\rfloor}(n)- a_{\lfloor \frac n2\rfloor }(n) \end{pmatrix}.$$
\end{lemma}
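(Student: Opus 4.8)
The plan is to reduce the matrix $B(n)$ to the claimed form by elementary row operations, in direct parallel with the reduction of $A(n+1)$ carried out in Lemma\,\ref{lemma: A(n)-step1}; the only structural difference is that here the target rows are the \emph{differences} $a_{\ell-1}(n)-a_\ell(n)$ rather than single scaled rows. Recall from the identity $b_\ell(n)=a_\ell(n+1)-a_\ell(n)$ established just above that $B(n)$ has rows $a_\ell(n+1)-a_\ell(n)$ for $\ell=0,\dots,\lfloor n/2\rfloor$. Write $T_0\defeq a_0(n+1)-a_0(n)$ and $T_\ell\defeq a_{\ell-1}(n)-a_\ell(n)$ for $1\le\ell\le\lfloor n/2\rfloor$ for the rows of the target matrix. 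The zeroth row of $B(n)$ is already $T_0$, and by Corollary\,\ref{cor: r-relations}\,(1) one has $a_1(n+1)=a_0(n)$, so the first row $b_1(n)=a_0(n)-a_1(n)$ already equals $T_1$. Thus it remains to reduce the rows indexed $\ell\ge 2$.

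For $\ell\ge 2$ I would use Proposition\,\ref{prop: r-relations}\,(2), rewritten exactly as in the proof of Lemma\,\ref{lemma: A(n)-step1} (after shifting the index by one), to expand $a_\ell(n+1)=2a_{\ell-1}(n)+2\sum_{j=0}^{\ell-2}(-1)^{\ell-1+j}a_j(n)+c_\ell$, where $c_\ell=-a_0(n)$ for $\ell$ odd and $c_\ell=a_0(n+1)$ for $\ell$ even. This gives $b_\ell(n)=T_\ell+E_\ell$ with $E_\ell=a_{\ell-1}(n)+2\sum_{j=0}^{\ell-2}(-1)^{\ell-1+j}a_j(n)+c_\ell$. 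The crux is to show $E_\ell\in\spn\{T_0,\dots,T_{\ell-1}\}$; granting this, one subtracts the corresponding combination of the already-reduced rows lying above row $\ell$, turning that row into $T_\ell$ without changing the determinant. Processing the rows from top to bottom then yields the claim by induction.

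To verify $E_\ell\in\spn\{T_0,\dots,T_{\ell-1}\}$ I would substitute the telescoping identities $a_i(n)=a_0(n)-\sum_{s=1}^i T_s$ and $a_0(n+1)=T_0+a_0(n)$ into $E_\ell$; this rewrites $E_\ell$ purely in terms of $a_0(n)$ and $T_0,\dots,T_{\ell-1}$, and the entire content is that the coefficient of $a_0(n)$ vanishes. That coefficient equals $1+2\sum_{j=0}^{\ell-2}(-1)^{\ell-1+j}\pm 1$, and since $\sum_{j=0}^{\ell-2}(-1)^{\ell-1+j}$ equals $0$ for $\ell$ odd and $-1$ for $\ell$ even, it comes out to $1+0-1=0$ and $1-2+1=0$ respectively. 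Hence $E_\ell$ is literally an explicit linear combination of $T_0,\dots,T_{\ell-1}$, as required. The main obstacle is precisely this bookkeeping, namely tracking the parity-dependent boundary term $c_\ell$ together with the alternating sum so that the $a_0(n)$-coefficient cancels in both parities; everything else is routine row reduction of the type already used for $A(n+1)$.
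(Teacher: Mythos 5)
Your proposal is correct and is essentially the paper's own argument: the paper likewise rewrites Proposition\,\ref{prop: r-relations}\,(2) so that each row $b_\ell(n)=a_\ell(n+1)-a_\ell(n)$ becomes the target difference row plus a linear combination of the target rows above it (the paper does this via a single telescoping-sum identity, you via substituting the telescoping identities and checking that the coefficient of $a_0(n)$ cancels in both parities), and then concludes by row-wise induction. The two write-ups differ only in bookkeeping, not in substance.
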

\begin{proof}
Rewriting the expression of Proposition\,\ref{prop: r-relations}, (2) as a telescoping sum gives:
$$a_{\ell+1}(n+1) = a_\ell(n) +\sum_{j=0}^{\ell-1}(-1)^{j+\ell-1} (a_{\ell-j}(n) - a_{\ell-j-1}(n)) + \begin{cases}0 & \text{$\ell$ even}\\ a_0(n+1)-a_0(n)& \text{$\ell$ odd}\end{cases}.$$
The desired statement then follows immediately from an induction on rows.
\end{proof}

The following lemma finishes the proof of Theorem\,\ref{combi}:
\begin{lemma}Let $n\in\mtn$, then:
$$\det B(2n) = (-1)^n\frac12 \det A(2n), \qquad \det B(2n-1) = \det A(2n-1).$$
\end{lemma}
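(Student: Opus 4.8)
The plan is to turn each identity into the computation of the determinant of an explicit transition matrix, taking Lemma~\ref{lemma: B(n)-step1} as the entry point. That lemma rewrites $\det B(N)$ (for $N=2n$ and $N=2n-1$) as the determinant of a matrix whose rows are the exceptional row $a_0(N+1)-a_0(N)$ together with the consecutive differences $a_{\ell}(N)-a_{\ell+1}(N)$. Since $A(N)$ has rows $a_0(N),\dots,a_{\lfloor(N-1)/2\rfloor}(N)$, the aim is to write every row occurring in Lemma~\ref{lemma: B(n)-step1} as a linear combination of the rows of $A(N)$; the $B$-matrix then factors as $T\cdot A(N)$ and $\det B(N)=\det T\cdot\det A(N)$, so that everything reduces to evaluating the small, explicit matrix $T$.

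For the odd case $B(2n-1)$ both matrices are $n\times n$, so no size adjustment is needed. Here the only row not already a difference of rows of $A(2n-1)$ is the top row $a_0(2n)-a_0(2n-1)$, and $a_0(2n)$ is expanded in terms of $a_0(2n-1),\dots,a_{n-1}(2n-1)$ via Corollary~\ref{cor: r-relations}(2). After telescoping the consecutive-difference rows into cumulative sums $a_0(2n-1)-a_k(2n-1)$ (a unipotent, hence determinant-preserving, row operation) and clearing the exceptional row against them, the transition matrix collapses to triangular form; a short sign count then shows $\det T=1$ for both parities of $n$, giving $\det B(2n-1)=\det A(2n-1)$.

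The even case $B(2n)$ is the more delicate one, because $B(2n)$ is $(n+1)\times(n+1)$ while $A(2n)$ is only $n\times n$, so the rows carry an appended zero tail. The key observation is that, with this convention, the last column of the matrix from Lemma~\ref{lemma: B(n)-step1} has the form $(1,0,\dots,0)^{\top}$: indeed $a_0(2n+1)_n=1$ while $a_\ell(2n)_n=0$ for all $\ell$, exactly as used in the proof of Lemma~\ref{lemma: A-induc}. Cofactor-expanding along this last column contributes the sign $(-1)^n$ and drops the problem to an $n\times n$ determinant with no remaining tail. The only row of this reduced matrix involving $a_n(2n)$ is rewritten using $a_0(2n)=2a_n(2n)$ from Proposition~\ref{prop: r-relations}(1), so that all rows become linear combinations of $a_0(2n),\dots,a_{n-1}(2n)$; the resulting transition matrix is bidiagonal with a single extra entry $-\tfrac12$ in the bottom-left corner, whose determinant evaluates to $\tfrac12$. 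Combining the two factors yields $\det B(2n)=(-1)^n\tfrac12\det A(2n)$.

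I expect the main obstacle to be the careful bookkeeping in the even case: correctly handling the appended zeros that reconcile the differing sizes of $B(2n)$ and $A(2n)$, and then evaluating the two transition determinants ($1$ and $\tfrac12$) together with the associated cofactor sign without error. Once the lemma is established, combining it with the values of $\det A(2n)$ and $\det A(2n+1)$ computed above immediately produces parts (3) and (4) of Theorem~\ref{combi}, since the parity exponents match after the elementary identity $n(n+1)/2=n(n-1)/2+n$.
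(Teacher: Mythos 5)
Your proposal is correct and takes essentially the same route as the paper's proof: both arguments start from Lemma~\ref{lemma: B(n)-step1}, then use Corollary~\ref{cor: r-relations}(2) to handle $B(2n-1)$, and for $B(2n)$ use the relation $a_n(2n)=\tfrac12 a_0(2n)$ from Proposition~\ref{prop: r-relations}(1) together with the observation that the last column is governed by $a_0(2n+1)_n=1$ and $a_\ell(2n)_n=0$. Your two deviations --- packaging the row operations as a transition-matrix factorization $B=T\cdot A$ and performing the $(-1)^n$ cofactor expansion along the last column \emph{before} rather than after the row reductions in the even case --- are cosmetic reorganizations of the same computation, and your claimed values $\det T=\tfrac12$ (even case) and $\det T=1$ (odd case, both parities of $n$) are indeed what the bookkeeping yields.
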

\begin{proof}
We begin with the case of $B(2n)$. Using Lemma\,\ref{lemma: B(n)-step1}:
$$\det B(2n)=\det \begin{pmatrix}a_0(2n+1)-a_0(2n) \\ a_0(2n) -a_1(2n)\\ (a_0(2n)-a_1(2n) )+ (a_1(2n)-a_2(2n))\\ \vdots\\ \sum_{j=0}^{n-1} (a_j(2n)-a_{j+1}(2n)) \end{pmatrix}=\det  \begin{pmatrix}a_0(2n+1)-a_0(2n) \\ a_0(2n) -a_1(2n)\\ a_0(2n)-a_2(2n)\\ \vdots\\ a_0(2n)-a_n(2n)\end{pmatrix}.$$
From $a_n(2n)=\frac12 a_0(2n)$ we may subtract $a_0(2n)$ from all rows except the last without changing the determinant, giving:
$$\det B(2n) = \det \begin{pmatrix} a_0(2n+1) \\ -a_1(2n) \\ \vdots \\-a_{n-1}(2n) \\ \frac12 a_0(2n)\end{pmatrix} = \frac 12 \det\begin{pmatrix} a_0(2n+1) \\ a_0(2n) \\ \vdots \\ a_{n-2}(2n) \\ a_{n-1}(2n)\end{pmatrix}.$$
As in the calculation for $A(2n+1)$ the last component of $a_0(2n+1)$ is equal to $1$, while it is $0$ for all $a_\ell(2n)$, whence:
$$\det B(2n) = (-1)^n \frac12 \det A(2n).$$
For the calculation of $\det B(2n-1)$ we first rewrite the expression for $a_0(2n)$ from Corollary\,\ref{cor: r-relations} as a sum of successive differences:
$$a_0(2n)= 2\sum_{j=0}^{n-2}(-1)^j ( a_j(2n-1)-a_{j+1}(2n-1))+\begin{cases}2(a_0(2n-1)-a_{n-1}(2n-1)) & \text{$n$ even}\\ 2a_{n-1}(2n-1)& \text{$n$ odd}\end{cases}.$$
Plugging this into $\det B(2n-1)$ we summarize both cases:
$$\det B(2n-1) =\det \begin{pmatrix}(-1)^n( a_0(2n-1)- 2a_{n-1}(2n-1))\\a_0(2n-1) - a_1(2n-1)\\ \vdots \\ a_{n-2}(2n-1) - a_{n-1}(2n-1) \end{pmatrix}.$$
As in the calculation for $B(2n)$ adding $\sum_{j=0}^{\ell-1} (a_j(2n-1) - a_{j+1}(2n-1))$ to the row containing $a_{\ell-1}(2n-1)-a_\ell(2n-1)$ does not change the determinant. One recovers:
\begin{align*}\det B(2n-1) =&(-1)^n\det \begin{pmatrix}a_0(2n-1)- 2a_{n-1}(2n-1) \\a_0(2n-1) - a_1(2n-1)\\ a_0(2n-1)-a_2(2n-1)\\ \vdots \\ a_{0}(2n-1) - a_{n-1}(2n-1) \end{pmatrix}= (-1)^n\det \begin{pmatrix}-a_{n-1}(2n-1) \\a_0(2n-1) - a_1(2n-1)\\ a_0(2n-1)-a_2(2n-1)\\ \vdots \\ a_{0}(2n-1) - a_{n-1}(2n-1) \end{pmatrix}\\
=& (-1)^n\det \begin{pmatrix}-a_{n-1}(2n-1) \\a_0(2n-1) - a_1(2n-1)\\ a_0(2n-1)-a_2(2n-1)\\ \vdots \\ a_{0}(2n-1)  \end{pmatrix} =  (-1)^n\det \begin{pmatrix}-a_{0}(2n-1) \\- a_1(2n-1)\\ -a_2(2n-1)\\ \vdots \\ - a_{n-1}(2n-1) \end{pmatrix}\\
=&\det A(2n-1),
\end{align*}
whence the claim.
\end{proof}

\bibliographystyle{amsplain}
\bibliography{my}
\end{document}